\tikzset{curve/.style={settings={#1},to path={(\tikztostart)
    .. controls ($(\tikztostart)!\pv{pos}!(\tikztotarget)!\pv{height}!270:(\tikztotarget)$)
    and ($(\tikztostart)!1-\pv{pos}!(\tikztotarget)!\pv{height}!270:(\tikztotarget)$)
    .. (\tikztotarget)\tikztonodes}},
    settings/.code={\tikzset{quiver/.cd,#1}
        \def\pv##1{\pgfkeysvalueof{/tikz/quiver/##1}}},
    quiver/.cd,pos/.initial=0.35,height/.initial=0}
\tikzset{tail reversed/.code={\pgfsetarrowsstart{tikzcd to}}}
\tikzset{2tail/.code={\pgfsetarrowsstart{Implies[reversed]}}}
\tikzset{2tail reversed/.code={\pgfsetarrowsstart{Implies}}}
\tikzset{no body/.style={/tikz/dash pattern=on 0 off 1mm}}
\newtheorem{theorem}{Theorem}[section]
\newtheorem{proposition}[theorem]{Proposition}
\newtheorem{lemma}[theorem]{Lemma}
\newtheorem{corollary}[theorem]{Corollary}
\theoremstyle{definition}
\newtheorem{definition}[theorem]{Definition}
\newtheorem{remark}[theorem]{Remark}
\newtheorem{example}[theorem]{Example} 
\newtheorem{conjecture}[theorem]{Conjecture}
\newtheorem{problem}[theorem]{Problem}
\theoremstyle{remark}
\newtheorem*{acknowledgements}{Acknowledgements}
\newcommand{\uxa}{\ensuremath{(\underline{X},\underline{A})}} 
\newcommand{\ux}{\ensuremath{(\underline{X},\underline{\ast})}}
\newcommand{\cxx}{\ensuremath{(\underline{CX},\underline{X})}}
\newcommand{\cyy}{\ensuremath{(\underline{CY},\underline{Y})}} 
\newcommand{\clxx}{\ensuremath{(\underline{C\Omega X},\underline{\Omega X})}}
\newcommand{\catK}{\mathrm{cat}(K)}
\newcommand{\cat}{\mathrm{cat}}
\newcommand{\Map}{\mathrm{Map}}
\newcommand{\calD}{\mathcal{D}}
\newcommand{\pxx}{\ensuremath{(\underline{PX},\underline{X})}}
\newcommand{\op}{^{\mathrm{op}}}
\DeclareMathOperator*{\holim}{holim}
\DeclareMathOperator*{\hocolim}{hocolim}
\begin{document}

\title{Polyhedral Coproducts}
\date{}

\author[S.~Amelotte]{Steven Amelotte}
\address{School of Mathematics and Statistics, Carleton University, Ottawa, ON K1S 5B6, Canada}
\email{stevenamelotte@cunet.carleton.ca} 

\author[W.~Hornslien]{William Hornslien}
\address{Institut Fourier - UMR 5582, Université Grenoble-Alpes, CNRS, CS 40700, 38058 Grenoble
Cedex 9, France}
\email{william-proctor.hornslien@univ-grenoble-alpes.fr}

\author[L.~Stanton]{Lewis Stanton}
\address{School of Mathematical Sciences, University of Southampton, University Rd, Southampton, SO17 1BJ, United Kingdom}
\email{lrs1g18@soton.ac.uk}

\keywords{Polyhedral products, homotopy limits, loop space decompositions}
\subjclass[2020]{Primary: 55P10, 55P35. Secondary: 18A30, 55P30}

\begin{abstract}
Dualising the construction of a polyhedral product, we introduce the notion of a \emph{polyhedral coproduct} as a certain homotopy limit over the face poset of a simplicial complex. We begin a study of the basic properties of polyhedral coproducts, surveying the Eckmann--Hilton duals of various familiar examples and properties of polyhedral products. In particular, we show that polyhedral coproducts give a functorial interpolation between the wedge and cartesian product of spaces which differs from the one given by polyhedral products, and we establish a general loop space decomposition for these spaces which is dual to the suspension splitting of a polyhedral product due to Bahri, Bendersky, Cohen and Gitler.
\end{abstract}

\maketitle

\tableofcontents

\section{Introduction}

Polyhedral products are natural subspaces of cartesian products defined as certain colimits over the face poset of a finite simplicial complex $K$. This construction generalises and unifies into a common combinatorial framework many familiar methods of constructing new topological spaces from given ones---for example, products, wedge sums, joins, half-smash products and the fat wedge construction are all special cases. Since their introduction by Bahri, Bendersky, Cohen and Gitler in ~\cite{BBCG1}, the topology of polyhedral products has become a growing topic of investigation within homotopy theory and has made fruitful contact with many other areas of mathematics. Notable examples include toric topology, following Buchstaber and Panov's~\cite{BP} formulation of moment-angle complexes and Davis--Januszkiewicz spaces as polyhedral products; commutative algebra, where polyhedral products give geometric realisations of Stanley--Reisner rings and their Tor algebras; and geometric group theory, where polyhedral products model the classifying spaces of right-angled Artin and Coxeter groups. 
%Other examples include robotics~\cite{HCK,KT} and topological data analysis~\cite{BLPSS}. 
For more on the history and far-reaching applications of polyhedral products, we recommend the excellent survey~\cite{BBC} and references therein.

Motivated by the ubiquity and utility of polyhedral products, the purpose of this paper is to propose a definition for the dual notion of a \emph{polyhedral coproduct} and begin a study of its basic properties. Before describing the main results, we first review the construction of polyhedral products more precisely.

Let $(\underline{X},\underline{A})=\{(X_i,A_i)\}_{i=1}^m$ be an $m$-tuple of pointed CW-pairs, $\mathbf{SCpx}_m$ be the category of simplicial complexes on the vertex set $[m] = \{1,\ldots,m\}$ with morphisms given by simplicial inclusions, and $\mathbf{Top}_*$ be the category of pointed topological spaces. For a simplicial complex $K$, let $\catK$ denote the face poset of $K$, regarded as a small category with objects given by faces $\sigma\in K$ and morphisms given by face inclusions $\tau\subset\sigma$. We denote the initial object of $\catK$ by $\varnothing$, which corresponds to the empty face of $K$. The \emph{polyhedral product} associated to $(\underline{X},\underline{A})$ is the functor
\[ (\underline{X},\underline{A})^{(-)} \colon \mathbf{SCpx}_m \to \mathbf{Top}_* \]
which associates to each simplicial complex $K$ the (homotopy) colimit
\[ (\underline{X},\underline{A})^K=\hocolim_{\catK} \prod_{i=1}^m Y_i(\sigma), \]
where $Y_i\colon \catK \rightarrow \textbf{Top}_*$ is defined for each $i \in [m]$ by 
\[ Y_i(\sigma) = \begin{cases} X_i & \text{if }i \in \sigma \\ A_i & \text{if }i \notin \sigma. \end{cases} \] As has been pointed out in~\cite{KL,NR,WZZ}, for example, the homotopy colimit above agrees up to homotopy with the usual colimit $\bigcup_{\sigma\in K} \prod_{i=1}^m Y_i(\sigma)$ since each $(X_i,A_i)$ is an NDR-pair. In particular, each map in the diagram defining the polyhedral product is a cofibration, and the polyhedral product $(\underline{X},\underline{A})^K$ is a cellular subcomplex of $\prod_{i=1}^m X_i$ for all $K$. In the case that $A_i=*$ for all $i\in [m]$, this subcomplex $(\underline{X},\underline{*})^K$ naturally interpolates between the wedge $\bigvee_{i=1}^m X_i$ (when $K$ consists of $m$ disjoint vertices) and the product $\prod_{i=1}^m X_i$ (when $K=\Delta^{m-1}$ is the simplex on $m$ vertices).

Dualising the definition of a polyhedral product as a homotopy colimit of products, we define a polyhedral coproduct as a homotopy limit of coproducts, as follows.\footnote{Since we work only with homotopy limits, we define polyhedral coproducts with respect to any set of maps $f_i$, rather than insisting on fibrations.}

\begin{definition} \label{def:main}
Let $\underline{f} = (f_1,\ldots,f_m)$ be an $m$-tuple of maps $f_i\colon X_i \rightarrow A_i$ of pointed spaces. Define the \emph{polyhedral coproduct} associated to $\underline{f}$ as the functor 
\[ \underline{f}^{(-)}_{\mathrm{co}} \colon \mathbf{SCpx}_m \to \mathbf{Top}_* \]
which associates to each simplicial complex $K$ with $m$ vertices the homotopy limit 
\[ \underline{f}^K_{\mathrm{co}} = \holim_{\catK\op}  D(\sigma)  \]
of a diagram $D\colon \catK\op \to \textbf{Top}_*$, where $D(\sigma) = \bigvee_{i=1}^m Y_i(\sigma)$ and
\[ Y_i(\sigma) = \begin{cases} X_i & \text{if }i \in \sigma \\ A_i & \text{if }i \notin \sigma. \end{cases} \] 
\end{definition}
 
Note that for a face inclusion $\tau \subset \sigma \in K$, there are maps $Y_i(\sigma) \rightarrow Y_i(\tau)$ defined for each $i \in [m]$ by $f_i$ if $i \in \sigma\backslash\tau$ and by the identity map otherwise, and hence there is an induced map 
\[ \bigvee\limits_{i=1}^m Y_i(\sigma) \to \bigvee\limits_{i=1}^m Y_i(\tau).\]

For a family $(\underline{X}, \underline{A})$ of pairs of spaces, if the maps $f_i\colon X_i \to A_i$ are clear from context, we will sometimes denote $\underline{f}^K_{\mathrm{co}}$ by $(\underline{X}, \underline{A})_{\mathrm{co}}^K$. One example is the case that $A_i=*$ is a point, and $f_i$ is the constant map for all $i\in[m]$. In this case, as we show in Section~\ref{sec:basics}, the polyhedral coproduct $(\underline{X}, \underline{*})_{\mathrm{co}}^K$ naturally interpolates between $\bigvee_{i=1}^m X_i$ (when $K=\Delta^{m-1}$) and $\prod_{i=1}^m X_i$ (when $K$ is $m$ disjoint vertices). 

For polyhedral products, the relationship between the combinatorics of $K$ and the homotopy type of the space $(\underline{X},\underline{*})^K$ interpolating between the $m$-fold wedge and $m$-fold product is made clear after suspending. By~\cite[Theorem~2.15]{BBCG2}, there is a natural homotopy equivalence 
\begin{equation} \label{eq:introBBCG}
\Sigma \ux^K \simeq \bigvee\limits_{\sigma \in K} \Sigma X^{\wedge \sigma},
\end{equation}
where $X^{\wedge \sigma}=X_{i_1}\wedge\cdots\wedge X_{i_k}$ for each face $\sigma=\{i_1,\ldots,i_k\} \in K$. Notice that this generalises the well-known splitting of $\Sigma\big(\prod_{i=1}^m X_i\big)$ when $K=\Delta^{m-1}$, in which case the wedge above is indexed over all subsets of the vertex set $[m]$. For polyhedral coproducts, we dualise the suspension splitting~\eqref{eq:introBBCG} by establishing a loop space decomposition for $(\underline{X}, \underline{*})_{\mathrm{co}}^K$, which similarly generalises a product decomposition due to Porter \cite{P} for $\Omega\big(\bigvee_{i=1}^m X_i\big)$ when $K=\Delta^{m-1}$. 
\begin{theorem}[Theorem~\ref{thm:loopsplituxcase}] \label{thm:introux}
    Let $K$ be a simplicial complex on $[m]$ and let $X_1,\ldots,X_m$ be pointed, simply connected CW-complexes. Then there is a homotopy equivalence \[\Omega \ux^K_{\mathrm{co}} \simeq \prod\limits_{i=1}^{m} \Omega X_i \times \prod\limits_{b\in I} \Omega \Sigma \left(\bigwedge\limits_{\tau \in \mathcal{F}} ((\Omega X)^{\wedge \tau})^{\wedge b(\tau)}\right).\]
\end{theorem}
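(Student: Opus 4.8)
The plan is to loop the defining homotopy limit, use that $\Omega$ preserves homotopy limits, and then split the resulting $\catK\op$-diagram one Porter summand at a time \emph{before} taking the homotopy limit.

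First, since the loop space functor preserves homotopy limits, $\Omega\ux^K_{\mathrm{co}}\simeq\holim_{\catK\op}\Omega D$, where $\Omega D(\sigma)=\Omega\big(\bigvee_{i\in\sigma}X_i\big)$ and, for $\tau\subset\sigma$, the structure map $\Omega D(\sigma)\to\Omega D(\tau)$ is obtained by looping the collapse of the wedge summands indexed by $\sigma\setminus\tau$. Next I would invoke Porter's theorem: for simply connected CW-complexes there is a homotopy equivalence $\Omega\big(\bigvee_j Z_j\big)\simeq\prod_{w}P_w$, the product running over basic products $w$ (a Hall basis) in generators indexed by the summands, with $P_w=\Omega Z_j$ when $w$ has weight one and support $\{j\}$, and $P_w=\Omega\Sigma\big(\bigwedge_j(\Omega Z_j)^{\wedge m_j(w)}\big)$ when $w$ has weight at least two, $m_j(w)$ being the multiplicity of the $j$-th generator in $w$. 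The essential point is naturality: collapsing the summands in a subset carries the factor $P_w$ identically to the corresponding factor when $\mathrm{supp}(w)$ avoids that subset, and to the basepoint otherwise (since $\Omega Z_j\to\ast$ kills every $P_w$ with $j\in\mathrm{supp}(w)$). Granting this, and after arranging the splitting to be a levelwise equivalence of honest $\catK\op$-diagrams, $\Omega D$ is identified with $\prod_{w}Q^w$, where $w$ ranges over all basic products on $[m]$ and $Q^w(\sigma)=P_w$ if $\mathrm{supp}(w)\subseteq\sigma$ while $Q^w(\sigma)=\ast$ otherwise, with identity and constant structure maps.

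Since homotopy limits commute with products, $\Omega\ux^K_{\mathrm{co}}\simeq\prod_{w}\holim_{\catK\op}Q^w$, so it remains to evaluate each factor. If $S:=\mathrm{supp}(w)$ is not a face of $K$ then $Q^w$ is constant at $\ast$, so its homotopy limit is contractible. If $S\in K$, set $\mathcal{U}_S=\{\sigma\in K:\sigma\supseteq S\}$, a nonempty full subposet of $\catK$ with least element $S$; one checks that $Q^w$ is the right Kan extension along $\mathcal{U}_S\op\hookrightarrow\catK\op$ of the constant diagram at $P_w$ (the relevant comma categories are empty or have both an initial and a terminal object, so this is also the homotopy right Kan extension), whence $\holim_{\catK\op}Q^w\simeq\holim_{\mathcal{U}_S\op}P_w\simeq P_w$, the last equivalence because $N(\mathcal{U}_S\op)$ is contractible. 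Thus $\Omega\ux^K_{\mathrm{co}}\simeq\prod_{w:\,\mathrm{supp}(w)\in K}P_w$. Splitting off the weight-one factors (all vertices of $K$ being present, these contribute $\prod_{i=1}^m\Omega X_i$), the remaining product is over basic products $w$ of weight at least two with $\mathrm{supp}(w)\in K$, and rewriting it via the combinatorial bijection between such $w$ and the index set $I$ set up earlier in the paper---under which the multiplicities of $w$ are recovered as $m_i(w)=\sum_{\tau\ni i}b(\tau)$, so that $\bigwedge_i(\Omega X_i)^{\wedge m_i(w)}\cong\bigwedge_{\tau\in\mathcal{F}}\big((\Omega X)^{\wedge\tau}\big)^{\wedge b(\tau)}$---yields exactly the asserted formula. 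As consistency checks, $K=\Delta^{m-1}$ imposes no condition on $\mathrm{supp}(w)$ and recovers Porter's splitting of $\Omega\big(\bigvee_{i=1}^m X_i\big)$, while a discrete $K$ on $m$ vertices forces every surviving $w$ to have weight one, recovering $\Omega\big(\prod_{i=1}^m X_i\big)$; alternatively, if a decomposition for the general polyhedral coproduct $\underline{f}^K_{\mathrm{co}}$ is available, this case follows by specialising to $f_i\colon X_i\to\ast$ and feeding Porter's splitting of each $\Omega D(\sigma)$ into it.

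The step I expect to be the main obstacle is making Porter's decomposition sufficiently natural: upgrading the summand-wise behaviour under collapse maps to a genuine levelwise equivalence of $\catK\op$-diagrams---equivalently, arranging the James--Hopf/Hilton--Milnor construction to be strictly compatible with collapsing wedge summands---since only then may one pull the infinite product out of the homotopy limit. By comparison, the homotopy-limit bookkeeping and the final combinatorial re-indexing (presumably carried out when $\mathcal{F}$ and $I$ are defined) should be routine.
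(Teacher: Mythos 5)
Your overall architecture---loop the defining homotopy limit, split $\Omega D$ levelwise into a product of simpler $\catK\op$-diagrams, commute the product past the homotopy limit, and evaluate each factor's homotopy limit as either a single space (when the relevant support is a face of $K$) or a point (when it is not)---is essentially the paper's strategy, and your right Kan extension evaluation of $\holim_{\catK\op}Q^w$ is a clean way to make precise the paper's ``enumerate the distinct factors over the maximal faces'' step. However, the key input is misstated, and the error propagates into a wrong indexing set and a wrong final space.

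The decomposition you attribute to Porter, namely $\Omega\big(\bigvee_j Z_j\big)\simeq\prod_w P_w$ with $w$ ranging over a Hall basis in one generator \emph{per wedge summand} and $P_w=\Omega\Sigma\big(\bigwedge_j(\Omega Z_j)^{\wedge m_j(w)}\big)$ in weight $\geq 2$, is not Porter's theorem and is false: it conflates Porter's fibration with the Hilton--Milnor theorem by substituting $\Omega Z_j$ for the suspension coordinates, which is illegitimate since $Z_j\not\simeq\Sigma\Omega Z_j$. Already for $m=2$, Ganea/Porter gives exactly three factors, $\Omega(Z_1\vee Z_2)\simeq\Omega Z_1\times\Omega Z_2\times\Omega\Sigma(\Omega Z_1\wedge\Omega Z_2)$, whereas your formula produces infinitely many nontrivial factors, e.g.\ $\Omega\Sigma\big((\Omega Z_1)^{\wedge 2}\wedge\Omega Z_2\big)$ from $[x_1,[x_1,x_2]]$; a homology count in degree $3$ for $Z_1=Z_2=S^2$ (where $H_3$ of the correct answer has rank $8$) already rules this out. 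The correct route, which the paper takes, is in two stages: Porter's theorem identifies the fibre of $\bigvee_{i\in\sigma}X_i\to\prod_{i\in\sigma}X_i$ as the \emph{finite} wedge $\bigvee_{\tau\subseteq\sigma,\,|\tau|\geq2}\big(\Sigma(\Omega X)^{\wedge\tau}\big)^{\vee(|\tau|-1)}$, and only then is Hilton--Milnor applied to the loops on that wedge, with the Hall basis $B_\sigma$ taken on generators $a_{\tau,i}$ indexed by the wedge summands of the Porter fibre (pairs $(\tau,i)$ with $\tau\subseteq\sigma$, $|\tau|\geq 2$, $1\leq i\leq|\tau|-1$), not by the vertices. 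Consequently your claimed bijection between weight-$\geq2$ basic products on $[m]$ with support a face of $K$ and the index set $\bigcup_{\sigma\in\mathcal M}B_\sigma$ fails (for $K$ a single edge the latter is a single element), and the indexing of your final product is wrong. The naturality issue you flag as the main obstacle is real, and is handled in the paper by Proposition~\ref{naturalporterproj} and Corollary~\ref{HiltonMilnorproj}; but fixing it would not rescue the argument as written, since the levelwise decomposition you feed into the homotopy limit is not an equivalence in the first place.
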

\noindent As with the splitting~\eqref{eq:introBBCG}, the indexing set $I$ above is defined in terms of the faces of the simplicial complex $K$, and made explicit in Section~\ref{sec:Loopspacesofcoprod}.
The equivalence~\eqref{eq:introBBCG} is a special case of the more general Bahri--Bendersky--Cohen--Gitler splitting (henceforth, BBCG splitting) which identifies the homotopy type of any polyhedral product $(\underline{X},\underline{A})^K$ as a certain wedge after suspending once. In the case that each $X_i$ is contractible, the authors of \cite{BBCG2} obtain their splitting using a lemma regarding homotopy colimits of certain diagrams due to Welker, Ziegler and Živaljević~\cite[Proposition 3.5]{WZZ}. We first dualise the Welker--Ziegler--Živaljević lemma (see Lemma~\ref{lem:initialDiagramLemma}), and then use this to dualise the BBCG splitting to obtain the following result. This is a simplified version of the full statement of Theorem~\ref{thm:loopDomainContractible} where the indexing sets are made explicit in terms of the underlying simplicial complex. 
\begin{theorem}[Theorem~\ref{thm:loopDomainContractible}]
\label{thm:introcontractible}
    Let $K$ be a simplicial complex on $[m]$ and let $f_i\colon 
    X_i \to A_i$ be a map of pointed, simply connected CW-complexes where $X_i$ is contractible for $1 \leq i \leq m$. Then there is a homotopy equivalence
\begin{equation*}
    \Omega \underline{f}^{K} _{\mathrm{co}} \simeq \prod_{b\in I}\Omega \Map_*(\Sigma |K_{I_b}|, \Sigma \Omega A_{1}^{\wedge l_1(b)} \wedge \cdots \wedge \Omega A_{m}^{\wedge l_m(b)}).
\end{equation*}
\end{theorem}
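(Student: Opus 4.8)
The plan is to dualise the Bahri--Bendersky--Cohen--Gitler argument for polyhedral products with contractible factors, step by step, replacing colimits with limits, wedges with products, smash products with mapping spaces, and using the dualised Welker--Ziegler--\v{Z}ivaljevi\'c lemma (Lemma~\ref{lem:initialDiagramLemma}) in place of~\cite[Proposition 3.5]{WZZ}. First I would take loops of the defining homotopy limit: since $\Omega$ is itself a homotopy limit (a cotensor by $S^1$), it commutes with $\holim_{\catK\op}$, so $\Omega \underline{f}^K_{\mathrm{co}} \simeq \holim_{\catK\op} \Omega D(\sigma)$ where $D(\sigma) = \bigvee_{i=1}^m Y_i(\sigma)$. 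Because each $X_i$ is contractible, for every face $\sigma$ the space $D(\sigma)$ collapses up to homotopy to $\bigvee_{i \notin \sigma} A_i$, and the diagram maps become the evident projections onto sub-wedges. The key input is then Porter's decomposition of loops on a wedge: $\Omega\big(\bigvee_{i=1}^m A_i\big)$ splits as a product of $\prod \Omega A_i$ with loop spaces on suspensions of smash products of the $\Omega A_i$. Applying this fibrewise across the diagram turns $\Omega D$ into a product (over an indexing set built from non-faces) of diagrams of the form $\sigma \mapsto \Omega \Sigma(\text{smash of various } \Omega A_i)$, where the diagram structure is induced by inclusions of faces and the natural retractions.

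Next I would analyse each of these factor diagrams. Fixing a multi-index $b$ (recording how many copies of each $\Omega A_i$ appear in the relevant smash term from Porter's splitting), the associated diagram sends $\sigma$ to $\Omega\Sigma(\Omega A_1^{\wedge l_1(b)} \wedge \cdots \wedge \Omega A_m^{\wedge l_m(b)})$ when $\sigma$ avoids the support $I_b$ of $b$, and to a point (or more precisely, a contractible value) otherwise; the maps are identities where defined. This is exactly the shape of diagram handled by Lemma~\ref{lem:initialDiagramLemma}: the homotopy limit over $\catK\op$ of such a ``constant-on-a-sub-poset'' diagram valued in a loop space $\Omega Y$ is $\Map_*(\Sigma|K_{I_b}|, Y)$, where $K_{I_b}$ is the full subcomplex on the vertex set $I_b$ and $\Sigma|K_{I_b}|$ appears because the relevant nerve/realisation contributes the (reduced, suspended) geometric realisation of that subcomplex. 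Taking $Y = \Sigma(\Omega A_1^{\wedge l_1(b)} \wedge \cdots \wedge \Omega A_m^{\wedge l_m(b)})$ and reassembling the product over all $b \in I$ yields the claimed equivalence.

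The main obstacle is bookkeeping the indexing set $I$ and verifying that Porter's splitting is natural enough to be applied diagramwise. Porter's decomposition is not canonical on the nose---one must choose an iterated fibration and a splitting---so I would need to either choose these compatibly across the diagram (using that all the diagram maps are inclusions of sub-wedges, for which Porter's construction is functorial up to coherent homotopy) or phrase the argument so that only the homotopy type of each factor diagram matters, which suffices since homotopy limits are homotopy invariant. A secondary technical point is the simple connectivity hypothesis: it guarantees that all the spaces involved, in particular the mapping spaces $\Map_*(\Sigma|K_{I_b}|, -)$, are well-behaved and that the relevant Milnor-type exponential law and the interchange of $\Omega$ with $\holim$ hold without fibrancy worries. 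I expect the rest---identifying the sub-poset of $\catK\op$ on which each factor diagram is non-trivial with the face poset of the full subcomplex $K_{I_b}$, and matching the output of Lemma~\ref{lem:initialDiagramLemma} with $\Map_*(\Sigma|K_{I_b}|, -)$---to be a direct dualisation of the corresponding steps in~\cite{BBCG2}.
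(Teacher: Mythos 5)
Your proposal follows essentially the same route as the paper: loop the defining homotopy limit, apply Porter's decomposition naturally across the diagram, separate the resulting factors according to their support, and identify the homotopy limit of each factor with $\Map_*(\Sigma|K_{I_b}|,-)$ via the dualised wedge lemma (the paper packages the first steps as Theorem~\ref{thm:loopSplit} and the last as Lemma~\ref{lem:contractibleFlatProduct}). Two points you leave implicit are made explicit in the paper and are genuinely needed: the Hilton--Milnor theorem (Theorem~\ref{HiltonMilnor}), which splits Porter's single wedge-of-suspensions factor into a product of support-homogeneous pieces indexed by a Hall basis before any reduction by support is possible, and Lemma~\ref{lem:diagramContractingArrows}, which performs that reduction by right Kan extension along $\cat(K_{I_b})\op \hookrightarrow \catK\op$ --- note that the subposet on which each factor diagram is non-trivial consists of the faces \emph{disjoint} from $I_b$, so it is this Kan-extension argument, not a literal identification of that subposet with $\cat(K_{I_b})\op$, that reduces the computation to a diagram on $\cat(K_{I_b})\op$ of the exact shape handled by Lemma~\ref{lem:initialDiagramLemma}.
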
 
Both Theorem~\ref{thm:introux} and Theorem~\ref{thm:introcontractible} are special cases of a general loop space decomposition of an arbitrary polyhedral coproduct (see Theorem~\ref{thm:loopSplit}), dual to the suspension splitting of a polyhedral product.

Definition~\ref{def:main} is alternate to Theriault's definition of a \emph{dual polyhedral product}, which was introduced in~\cite{T} and used to identify the Lusternik--Schnirelmann cocategory of a simply connected space $X$ with the homotopy nilpotency of its loop space $\Omega X$. Although the two notions coincide in some special cases (see Remark~\ref{rem:coprodvsdual}), the diagrams defining polyhedral coproducts and dual polyhedral products are very different in general, and our definition is more suitable for dualising the BBCG splitting of $\Sigma(\underline{X},\underline{A})^K$ (see Section~\ref{sec:Loopspacesofcoprod}).

Although we restrict our attention to constructions in $\mathbf{Top}_*$ in this paper, note that polyhedral (co)products could be defined more generally in any model category $\mathcal{C}$, for example, by replacing the category of pointed spaces with $\mathcal{C}$ in the definitions above. Since any (closed) model category has an initial object and a terminal object, the polyhedral products and coproducts of the form $(\underline{X}, \underline{*})^K$ and $(\underline{X}, \underline{*})_{\mathrm{co}}^K$ can be defined in this setting to yield functorial interpolations between the categorical product and coproduct in $\mathcal{C}$.

\begin{acknowledgements}
The authors would like to thank the International Centre for Mathematical Sciences (ICMS), Edinburgh, for support and hospitality during the workshop ``Polyhedral Products: a Path Between Homotopy Theory and Geometric Group Theory", where work on this paper began. We are grateful to the workshop organisers and especially to Martin Bendersky, Mark Grant and Sarah Whitehouse for helpful and encouraging conversations at the beginning of this project. The second author would like to thank Louis Martini for helpful discussions regarding Lemma~\ref{lem:initialDiagramLemma} and the wedge lemma of Welker--Ziegler--Živaljević~\cite{WZZ}. The authors would also like to thank Stephen Theriault for reading a draft of this work, and an anonymous referee for valuable comments that improved the quality of this paper.
\end{acknowledgements}

\section{Basic properties} \label{sec:basics}

\subsection{Basic examples}

We begin by computing some basic examples of polyhedral coproducts, in each case illustrating the Eckmann--Hilton duality between these constructions and their corresponding polyhedral products.
\begin{example}[The $\underline{A}=\underline{*}$ case]\ \label{ex:pointcase} 
    \begin{enumerate}
		\item Let $K$ be $m$ disjoint vertices. In this case, the polyhedral product associated to the $m$-tuple of pairs $(\underline{X},\underline{*})=\{(X_i,*)\}_{i=1}^m$ is the wedge \[ (\underline{X},\underline{*})^K \simeq X_1 \vee \cdots \vee X_m. \] Dually, if $f_i\colon X_i \rightarrow *$ is the constant map for each $i=1,\ldots,m$, then by definition the corresponding polyhedral coproduct is given by
		\[\ux^K_{\mathrm{co}} \simeq X_1 \times \cdots \times X_m.\] 
		\item On the other extreme, let $K=\Delta^{m-1}$. The polyhedral product associated to $(\underline{X},\underline{*})$ in this case is
        \[ (\underline{X},\underline{*})^K \simeq X_1 \times \cdots \times X_m.\]
        Since the diagram defining $\ux^K_{\mathrm{co}}$ has an initial object corresponding to the maximal face of the simplex $\Delta^{m-1}$,
		\[\ux^K_{\mathrm{co}} \simeq X_1 \vee \cdots \vee X_m.\]
		\item Let $K=\partial \Delta^{m-1}$. The polyhedral product $\ux^K$ in this case is precisely the \emph{fat wedge} of the spaces $X_1,\ldots,X_m$, which is defined as \[FW(X_1,\ldots,X_m) = \{(x_1,\ldots,x_m) \mid x_i = * \text{ for at least one } i\}.\] Dual to the fat wedge is the \emph{thin product} of $X_1, \ldots, X_m$, as defined by Hovey in \cite[Definition 1]{Ho}. This construction is realised by the polyhedral coproduct $\ux^K_{\mathrm{co}}$. 
	\end{enumerate}	
\end{example} 

\begin{remark}
\label{rem:coprodvsdual}
    The dual polyhedral product, denoted $\uxa^K_D$, defined by Theriault \cite{T} also models some of the spaces in Example~\ref{ex:pointcase}. In particular, when $K$ is $m$ disjoint points, $\ux^K_D$ is equal to the thin product of $X_1,\ldots,X_m$. When $K = \partial{\Delta^{m-1}}$, $\ux^K_D \simeq X_1 \times \cdots \times X_m$. Outside of these cases, it is not clear whether there is any correspondence between the dual polyhedral product, and the polyhedral coproduct. Theriault also used the dual polyhedral product to give a loop space decomposition of the thin product. An alternate loop space decomposition of the thin product can be recovered in the context of polyhedral coproducts by Theorem~\ref{thm:loopsplituxcase}.
\end{remark}

Just like the polyhedral product $(\underline{X},\underline{*})^K$, the polyhedral coproduct $(\underline{X},\underline{*})_{\mathrm{co}}^K$ interpolates between the categorical product $X_1 \times \cdots \times X_m$ and coproduct $X_1 \vee \cdots \vee X_m$ as $K$ interpolates between a discrete set of vertices and a full simplex. Next, we compute two further examples of $\underline{f}^K_{\mathrm{co}}$ where the $m$-tuple $\underline{f}$ involves maps other than the constant map $X_i\to *$. An important class of polyhedral products (which includes generalised moment-angle complexes $(D^n,S^{n-1})^K$) is given by those associated to CW-pairs $(\underline{CX},\underline{X})=\{(CX_i,X_i)\}_{i=1}^m$ consisting of cones and their bases. The first example below dualises this case by replacing the cofibrations $X_i \hookrightarrow CX_i$ with path space fibrations $PX_i \to X_i$.

\begin{example}[Dual of the join]
	\label{ex:cojoin}
	Let $K=\partial\Delta^1$ be two disjoint vertices so that the only faces of $K$ are $\varnothing$, $\{1\}$ and $\{2\}$, and its face poset is given by $\{1\} \leftarrow \varnothing \rightarrow \{2\}$. In this case the polyhedral product $(\underline{CX},\underline{X})^K$ recovers the join of $X_1$ and $X_2$ as a pushout: 
    \[ (\underline{CX},\underline{X})^K=CX_1\times X_2 \cup_{X_1\times X_2} X_1\times CX_2 \simeq X_1\star X_2. \]
    For $i \in \{1,2\}$, let $f_i \colon PX_i \to X_i$ be the path space fibration over $X_i$. Since each $PX_i$ is contractible, the polyhedral coproduct $\underline{f}_{\mathrm{co}}^K=(\underline{PX},\underline{X})_{\mathrm{co}}^K = \holim(PX_1\vee X_2 \rightarrow X_1\vee X_2 \leftarrow X_1\vee PX_2)$ agrees with the homotopy limit of the middle column of the commutative diagram 
	\begin{equation*}
		\begin{tikzcd}
			* \arrow[d] \arrow[r] & X_1 \arrow[d] \arrow[r] & X_1  \arrow[d]                              \\
			F \arrow[r]     & X_1 \vee X_2 \arrow[r]  & X_1 \times X_2 \\
			* \arrow[u] \arrow[r] & X_2 \arrow[u] \arrow[r] & X_2, \arrow[u]         
		\end{tikzcd}
	\end{equation*} 
    where the vertical maps are inclusions and the rows are homotopy fibrations. The homotopy limit of the right column is contractible, so by taking homotopy limits of the columns we obtain a homotopy equivalence $(\underline{PX},\underline{X})_{\mathrm{co}}^K \simeq \Omega F$. By \cite[p.302]{G}, $F \simeq \Sigma (\Omega X_1 \wedge \Omega X_2)$, and so there is a homotopy equivalence $(\underline{PX},\underline{X})_{\mathrm{co}}^K \simeq \Omega \Sigma (\Omega X_1 \wedge \Omega X_2)$. This space is known as the \textit{cojoin} of $X_1$ and $X_2$.
\end{example}

\begin{example}[Dual of the half-smash]
	\label{ex:cohalfsmash}
Let $K=\partial\Delta^1$ be two disjoint vertices and let $(\underline{X},\underline{A})$ consist of the CW-pairs $\{(X_1,A_1),(X_2,A_2)\}=\{(CX,X),(Y,*)\}$. As in the previous example, the polyhedral product is a pushout $(\underline{X},\underline{A})^K=CX\times * \cup_{X\times *} X\times Y$. Since $CX\times *$ is contractible, this is simply the cofibre of the inclusion $X\times * \hookrightarrow X\times Y$, which by definition is the \emph{half-smash product}
\[ (\underline{X},\underline{A})^K \simeq X\ltimes Y. \]
To dualise this example, let $\underline{f}=(f_1,f_2)$ where $f_1\colon PX\to X$ is the path space fibration and $f_2\colon Y\to *$ is the constant map. Then by definition, the polyhedral coproduct is given by
\begin{align*}
\underline{f}_{\mathrm{co}}^K &= \holim(PX\vee * \rightarrow X\vee * \leftarrow X\vee Y) \\
&\simeq \operatorname{hofib}(X\vee Y \xrightarrow{\pi_X} X),
\end{align*}
the expected Eckmann--Hilton dual of the cofibre $(\underline{X},\underline{A})^K=\operatorname{hocofib}(X\xrightarrow{i_X}X\times Y)$ above. The homotopy fibre of the projection onto a wedge summand can be identified using Mather's Cube Lemma~\cite{Ma}, and we therefore obtain that the dual of the half-smash is given by
\[ \underline{f}_{\mathrm{co}}^K\simeq \Omega X \ltimes Y. \] Moreover, by Mather's Cube Lemma or \cite[Theorem~1.1]{G}, there is a homotopy fibration \[\Sigma \Omega X \wedge \Omega Y \rightarrow \Omega X \ltimes Y \rightarrow Y,\] where the right map is the projection map. The projection has a right homotopy inverse, implying there is a homotopy equivalence \[\Omega(\Omega X \ltimes Y) \simeq \Omega Y \times \Omega (\Sigma \Omega X \wedge \Omega Y).\] This result can be recovered in the context of polyhedral coproducts by Theorem~\ref{thm:loopSplit}.
\end{example}

\subsection{Functorial properties}

The polyhedral product is a bifunctor (see~\cite[Remark~2.3]{BBCG2}). Namely, it defines a functor from the category of ($m$-tuples of) CW-pairs to the category of CW-complexes, and it also defines a functor from the category of simplicial complexes to the category of CW-complexes. In this section, we prove that the polyhedral coproduct enjoys similar functorial properties. First, we show naturality with respect to maps of spaces.

\begin{theorem}
\label{thm:mapinducedbymaps}
    Let $K$ be a simplicial complex on $[m]$. For $1 \leq i \leq m$, let $f_i\colon X_i \rightarrow A_i$ and $f_i'\colon X_i' \rightarrow A_i'$ be maps. If there are maps $g_i\colon X_i \rightarrow X_i'$ and $h_i\colon A_i \rightarrow A_i'$ such that the diagram \begin{equation}\label{eqn:inducedmapfrommaps}\begin{tikzcd}
	{X_i} & {X_i'} \\
	{A_i} & {A_i'}
	\arrow["{h_i}", from=2-1, to=2-2]
	\arrow["{f_i}", from=1-1, to=2-1]
	\arrow["{f_i'}", from=1-2, to=2-2]
	\arrow["{g_i}", from=1-1, to=1-2]
\end{tikzcd}\end{equation} homotopy commutes, then there is an induced map $\underline{f}_{\mathrm{co}}^K \rightarrow \underline{f'}_{\mathrm{co}}^K$.
\end{theorem}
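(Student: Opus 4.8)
The plan is to reduce the statement to two standard facts about homotopy limits over $\catK\op$: that $\holim_{\catK\op}$ is functorial with respect to strict natural transformations of diagrams $\catK\op\to\mathbf{Top}_*$, and that it sends objectwise homotopy equivalences to homotopy equivalences. I would fix, for each $i$, a based homotopy $H_i\colon X_i\times[0,1]\to A_i'$ with $H_i(-,0)=f_i'\circ g_i$ and $H_i(-,1)=h_i\circ f_i$ witnessing the commutativity of \eqref{eqn:inducedmapfrommaps}. The first instinct is to define maps $D(\sigma)\to D'(\sigma)$ using $g_i$ on the $i$-th wedge summand when $i\in\sigma$ and $h_i$ when $i\notin\sigma$; but these will fail to assemble into a strict natural transformation, because for a face inclusion $\tau\subset\sigma$ with $i\in\sigma\setminus\tau$ the relevant square is precisely \eqref{eqn:inducedmapfrommaps}, which commutes only up to $H_i$. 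Rectifying this is the one step that will require care.

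To do so, I would first replace the target diagram by an equivalent one built from fibrations. For each $i$ let $\tilde X_i'$ be the mapping path space of $f_i'$, with its fibration $\tilde f_i'\colon\tilde X_i'\to A_i'$, the inclusion $j_i\colon X_i'\hookrightarrow\tilde X_i'$ of constant paths (so that $\tilde f_i'\circ j_i=f_i'$), and the projection $r_i\colon\tilde X_i'\to X_i'$ (so that $r_i\circ j_i=\mathrm{id}$); recall that $\tilde X_i'$ strong-deformation-retracts onto $j_i(X_i')$ through a based homotopy. Let $\tilde D'\colon\catK\op\to\mathbf{Top}_*$ be the diagram defined exactly as $D'$ but with each pair $(X_i',f_i')$ replaced by $(\tilde X_i',\tilde f_i')$; this is a well-defined functor by the same verification used for $D$ in Definition~\ref{def:main}. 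The maps $j_i$ (on summands with $i\in\sigma$) together with identity maps (on summands with $i\notin\sigma$) assemble into a strict natural transformation $D'\Rightarrow\tilde D'$, the only nontrivial naturality squares being those for $i\in\sigma\setminus\tau$, which commute on the nose since $\tilde f_i'\circ j_i=f_i'$. Each of its components $D'(\sigma)\to\tilde D'(\sigma)$ is a wedge of based deformation-retract inclusions, hence a homotopy equivalence, so $\holim\tilde D'$ computes $\underline{f'}^{K}_{\mathrm{co}}$ up to homotopy.

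Next I would use the homotopies to build a genuine (strict) natural transformation $D\Rightarrow\tilde D'$. Define $\tilde g_i\colon X_i\to\tilde X_i'$ by $\tilde g_i(x)=(g_i(x),H_i(x,-))$; this is a based map landing in the mapping path space because $H_i(x,0)=f_i'(g_i(x))$, and it satisfies the \emph{strict} identity $\tilde f_i'\circ\tilde g_i=h_i\circ f_i$ because $H_i(x,1)=h_i(f_i(x))$. Taking the $i$-th wedge summand of $D(\sigma)\to\tilde D'(\sigma)$ to be $\tilde g_i$ when $i\in\sigma$ and $h_i$ when $i\notin\sigma$ then yields maps compatible with all the structure maps: on summands with $i\in\tau$ or $i\notin\sigma$ the structure maps on both sides are identities and commutativity is immediate, while on summands with $i\in\sigma\setminus\tau$ commutativity is exactly the identity $\tilde f_i'\circ\tilde g_i=h_i\circ f_i$. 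Applying $\holim$ produces a map $\underline{f}^{K}_{\mathrm{co}}=\holim D\to\holim\tilde D'$.

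Finally I would compose with a homotopy inverse of the homotopy equivalence $\underline{f'}^{K}_{\mathrm{co}}=\holim D'\xrightarrow{\ \simeq\ }\holim\tilde D'$ from the previous step to obtain the desired induced map $\underline{f}^{K}_{\mathrm{co}}\to\underline{f'}^{K}_{\mathrm{co}}$ (well defined up to homotopy, and depending a priori on the chosen homotopies $H_i$). The main obstacle is the strictification carried out in the middle two steps—turning a ladder that commutes only up to homotopy into an honest map of diagrams; note that no coherence issues arise, since distinct indices $i$ correspond to distinct wedge summands and are strictified independently. The remaining ingredients, namely the well-definedness of $\tilde D'$ and the functoriality and homotopy-invariance of $\holim$, are standard, and the mapping-path-space device is already in use in Examples~\ref{ex:cojoin} and~\ref{ex:cohalfsmash}.
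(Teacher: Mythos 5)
Your proposal is correct, and at its core it follows the same strategy as the paper: define $F_\sigma\colon D(\sigma)\to D'(\sigma)$ by applying $g_i$ on the $i$-th wedge summand when $i\in\sigma$ and $h_i$ when $i\notin\sigma$, check compatibility with the structure maps, and apply $\holim_{\catK\op}$. The difference is that the paper's proof stops there, asserting that the homotopy commutativity of \eqref{eqn:inducedmapfrommaps} already makes the $F_\sigma$ into a natural transformation; strictly speaking they only assemble into a transformation that is natural up to homotopy, since the nontrivial naturality squares (those for $i\in\sigma\setminus\tau$) are precisely instances of \eqref{eqn:inducedmapfrommaps}. You correctly isolate this as the real content of the statement and repair it: replacing each $X_i'$ by the mapping path space of $f_i'$ and lifting $g_i$ to $\tilde g_i(x)=(g_i(x),H_i(x,-))$ gives the strict identity $\tilde f_i'\circ\tilde g_i=h_i\circ f_i$, hence an honest natural transformation $D\Rightarrow\tilde D'$, while the comparison $D'\Rightarrow\tilde D'$ is an objectwise homotopy equivalence, so homotopy invariance of $\holim$ finishes the argument. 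Your remark that no coherence issues arise because distinct wedge summands are rectified independently is exactly why this works so cleanly here. The only costs of your route are that the resulting map is defined only up to the choice of the homotopies $H_i$ (which the statement permits, as it asserts mere existence) and that a homotopy inverse must be inserted at the last step; when the squares \eqref{eqn:inducedmapfrommaps} commute strictly, your construction collapses to the paper's one-line argument.
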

\begin{proof}
    Let $D_K$ and $D'_K$ be the diagrams defining $\underline{f}_{\mathrm{co}}^K$ and $\underline{f'}_{\mathrm{co}}^K$ respectively. For a face $\sigma \in K$, define a map $F_\sigma: D_K(\sigma) \rightarrow D'_K(\sigma)$, defined by \[F_\sigma\colon D_K(\sigma) = \bigvee\limits_{i=1}^m Y_i \xrightarrow{\bigvee\limits_{i=1}^m \phi_i} \bigvee\limits_{i=1}^m Y_i' = D'_K(\sigma),\] where $\phi_i = g_i$ if $i \in \sigma$, and $\phi_i = h_i$ if $i \notin \sigma$. By (\ref{eqn:inducedmapfrommaps}), $F_\sigma$ induces a natural transformation $D_K \rightarrow D'_K$, which in turn induces a map $\underline{f}_{\mathrm{co}}^K \rightarrow \underline{f'}_{\mathrm{co}}^K$.
\end{proof}

The definition of $\underline{f}_{\mathrm{co}}^K$ is also natural with respect to simplicial inclusions.

\begin{theorem}
\label{thm:incinducedmap}
    Let $K$ be a simplicial complex on $[m]$, and let $L$ be a subcomplex of $K$ on $[n]$ with $n \leq m$. Then the simplicial inclusion $L \rightarrow K$ induces a map $\underline{f}_{\mathrm{co}}^K \rightarrow \underline{f}_{\mathrm{co}}^L$.
\end{theorem}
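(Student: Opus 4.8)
The plan is to realise the inclusion-induced map as the map on homotopy limits coming from a restriction-of-diagrams functor, together with a compatibility comparison between the two wedge summands. First I would address the discrepancy between the vertex sets: $L$ has $n$ vertices but $\underline{f}$ has $m$ components, so strictly speaking $\underline{f}^L_{\mathrm{co}}$ is defined using the $n$-tuple $(f_1,\dots,f_n)$, while $\underline{f}^K_{\mathrm{co}}$ uses all $m$ of them. For a face $\sigma \in L \subseteq K$, the diagram value for $K$ is $D_K(\sigma) = \bigvee_{i=1}^m Y_i(\sigma)$, which contains the subwedge $\bigvee_{i=1}^n Y_i(\sigma) = D_L(\sigma)$ together with the extra summands $\bigvee_{i=n+1}^m A_i$ (since $i \notin \sigma$ for $i > n$). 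Collapsing these extra summands gives a natural retraction $D_K(\sigma) \to D_L(\sigma)$.

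The key step is to check that this collapse is natural with respect to face inclusions $\tau \subset \sigma$ in $L$. The structure maps of $D_K$ and $D_L$ act by $f_i$ on the summand $Y_i$ when $i \in \sigma \setminus \tau$ and by the identity otherwise; since $\sigma \setminus \tau \subseteq [n]$ when $\sigma, \tau \in L$, the two structure maps agree on the subwedge $\bigvee_{i=1}^n Y_i$ and the extra summands (indexed by $i > n$) are all $A_i$ mapped by the identity, so collapsing them commutes with the structure maps. This produces a natural transformation $D_K|_{\mathrm{cat}(L)\op} \to D_L$ of diagrams over $\mathrm{cat}(L)\op$. Precomposing with the restriction functor induced by the inclusion $\mathrm{cat}(L) \hookrightarrow \mathrm{cat}(K)$ of face posets, we get
\[
\underline{f}^K_{\mathrm{co}} = \holim_{\mathrm{cat}(K)\op} D_K \longrightarrow \holim_{\mathrm{cat}(L)\op} D_K|_{\mathrm{cat}(L)\op} \longrightarrow \holim_{\mathrm{cat}(L)\op} D_L = \underline{f}^L_{\mathrm{co}},
\]
where the first arrow is the canonical map induced by restricting along a functor between indexing categories (functoriality of $\holim$ in the diagram category) and the second is induced by the natural transformation of diagrams just constructed.

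The main obstacle I anticipate is purely bookkeeping rather than conceptual: one must be careful about the fact that $\mathrm{cat}(L)$ is a full subposet of $\mathrm{cat}(K)$ but the empty face $\varnothing$ is the initial object of both, so the restriction functor is well defined and preserves the initial object; and one must track that the $m-n$ "phantom" vertices contribute only constant summands $A_i$ to every $D_K(\sigma)$ with $\sigma \in L$, which is what makes the retraction natural. If one instead wished for strict functoriality (a genuine functor on a category of simplicial complexes with varying vertex sets), a little more care with the indexing would be needed, but for the statement as given—producing the induced map—the composite above suffices. I would close by remarking that when $n = m$ this simplifies: the map is just the canonical restriction map $\holim_{\mathrm{cat}(K)\op} D \to \holim_{\mathrm{cat}(L)\op} D|_{\mathrm{cat}(L)\op}$ of homotopy limits along the inclusion of face posets.
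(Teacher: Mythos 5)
Your proposal is correct and follows essentially the same route as the paper: restrict the diagram $D_K$ to $\cat(L)\op$ (the paper calls this restricted diagram $D_L^K$), then apply the pinch map collapsing the summands indexed by $i>n$ to get a natural transformation to $D_L$, and compose the two induced maps on homotopy limits. Your explicit check that the pinch is natural with respect to face inclusions in $L$ is a detail the paper leaves implicit, but the argument is the same.
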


\begin{proof}
   Let $D_K$ and $D_L$ be the diagrams defining $\underline{f}^K_{\mathrm{co}}$ and $\underline{f}_{\mathrm{co}}^L$ respectively. Let $D_L^K\colon \cat(L)\op \to \mathbf{Top}_*$ be the diagram with $D_L^K(\sigma) = \bigvee\limits_{i=1}^m Y_i(\sigma)$, where $Y_i(\sigma) = X_i$ if $i \in \sigma$, and $Y_i(\sigma) = A_i$ if $i \notin \sigma$. By definition of $\underline{f}^K_{\mathrm{co}}$ as a homotopy limit, there are canonical maps $\underline{f}^K_{\mathrm{co}} \to D_L^K(\sigma)$ for all $\sigma \in L$, and so the inclusion $\cat(L)\op \rightarrow \cat(K)\op$ induces a map $\displaystyle \underline{f}_{\mathrm{co}}^K \rightarrow \holim_{\cat(L)\op} D_L^K$.
   
    Now define a natural transformation of diagrams $D_L^K \rightarrow D_L$ by the pinch map \[D^K_L(\sigma) = \bigvee\limits_{i=1}^m Y_i \rightarrow \bigvee\limits_{i=1}^n Y_i = D_L(\sigma).\] This induces a map $\displaystyle \holim_{\cat(L)\op} D_L^K \rightarrow \underline{f}^L_{\mathrm{co}}$. Therefore, the simplicial inclusion induces the composite \[\underline{f}_{\mathrm{co}}^K \rightarrow \holim_{\cat(L)\op} D_L^K \rightarrow \underline{f}_{\mathrm{co}}^L. \qedhere\]
\end{proof}

\begin{remark}
    \label{rem:InclusionDiagram}
    The map $\underline{f}_{\mathrm{co}}^K \rightarrow \underline{f}_{\mathrm{co}}^L$ can be represented as the homotopy limit of a map of diagrams $D_K \to D_L$. For each $\sigma \in L$, we have a pinch map $D_K(\sigma) = \bigvee\limits_{i=1}^m Y_i \rightarrow \bigvee\limits_{i=1}^n Y_i = D_L(\sigma)$. By computing $\holim D_K$, one can see that the maps $\underline{f}_{\mathrm{co}}^K \to D_L(\sigma)$ for $\sigma \in L$ are the maps described in the proof of Theorem~\ref{thm:incinducedmap}.
\end{remark}

\subsection{Retractions}

Let $K$ be a simplicial complex and $L$ a full subcomplex of $K$. For polyhedral products, by \cite[Lemma~2.2.3]{DS}, there is a map $\uxa^K \rightarrow \uxa^L$ which is a left inverse for the map $\uxa^L \rightarrow \uxa^K$. In the case of polyhedral coproducts, there is an analogous statement. 

\begin{theorem}
    Let $K$ be a simplicial complex on $[m]$ and $L$ be a full subcomplex of $K$ on $[n]$, with $n < m$. Then there is a right homotopy inverse for the map $\underline{f}^K_{\mathrm{co}} \rightarrow \underline{f}^L_{\mathrm{co}}$ induced by the simplicial inclusion $L \rightarrow K$.
\end{theorem}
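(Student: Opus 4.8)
The plan is to dualise the polyhedral product argument of \cite[Lemma~2.2.3]{DS}. For polyhedral products the retraction $\uxa^K \to \uxa^L$ arises because a full subcomplex $L$ on vertex set $[n]\subset[m]$ satisfies $\cat(L) = \{\sigma\in\cat(K) : \sigma\subseteq[n]\}$, so that $\cat(L)\op$ is not merely a subcategory but a \emph{coreflective} subcategory of $\cat(K)\op$: the assignment $\sigma\mapsto \sigma\cap[n]$ is right adjoint to the inclusion $\cat(L)\op\hookrightarrow\cat(K)\op$. Dually, I expect $\sigma\mapsto\sigma\cap[n]$ to exhibit $\cat(L)\op$ as a reflective subcategory of $\cat(K)\op$, i.e.\ the inclusion has a \emph{left} adjoint; this is exactly the variance needed to produce a section rather than a retraction at the level of homotopy limits. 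Concretely, for $\tau\subseteq\sigma$ in $\cat(K)$ we have $\tau\cap[n]\subseteq\sigma\cap[n]$, and whenever $\rho\in\cat(L)$ with $\rho\subseteq\sigma$ we automatically have $\rho\subseteq\sigma\cap[n]$ (using fullness of $L$), which is the universal property.

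The key steps, in order, are as follows. First I would set up the retraction map $r\colon\underline{f}^K_{\mathrm{co}}\to\underline{f}^L_{\mathrm{co}}$ explicitly, rather than via abstract adjunction: for each $\sigma\in\cat(L)$ consider the composite $\underline{f}^K_{\mathrm{co}}\to D_K(\sigma)\to D_L(\sigma)$ of the canonical homotopy-limit projection with the pinch map $\bigvee_{i=1}^m Y_i(\sigma)\to\bigvee_{i=1}^n Y_i(\sigma)$, exactly as in Theorem~\ref{thm:incinducedmap} and Remark~\ref{rem:InclusionDiagram}. Second, I would build the candidate section $s\colon\underline{f}^L_{\mathrm{co}}\to\underline{f}^K_{\mathrm{co}}$. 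A point of $\holim$ over $\cat(K)\op$ is (homotopy-)compatible choices of points in $D_K(\sigma)$ for all $\sigma\in\cat(K)$; given compatible choices over $\cat(L)$, I assign to a general $\sigma\in\cat(K)$ the point coming from $\sigma\cap[n]\in\cat(L)$ via the map $D_K(\sigma\cap[n])=\bigvee_{i=1}^n Y_i(\sigma\cap[n])\hookrightarrow\bigvee_{i=1}^m Y_i(\sigma\cap[n])\to\bigvee_{i=1}^m Y_i(\sigma)=D_K(\sigma)$, where the last map is induced by the structure maps of $D_K$ going \emph{into} $\sigma$ from faces contained in it (identity on the $i\in[n]$ wedge summands, and a choice of basepoint on the summands with $i\notin[n]$). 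Third, I would verify this is a well-defined natural transformation of diagrams over $\cat(K)\op$: for $\tau\subseteq\sigma$ the square comparing the assignments at $\tau$ and $\sigma$ commutes because $\tau\cap[n]\subseteq\sigma\cap[n]$ and the structure maps $f_i$ compose consistently; here is where fullness of $L$ is essential, so that no face of $\sigma\cap[n]$ that lies in $K$ is missed. Finally, I would compute $r\circ s$ and identify it with the identity on $\underline{f}^L_{\mathrm{co}}$: for $\sigma\in\cat(L)$ we have $\sigma\cap[n]=\sigma$, and the pinch map post-composed with the inclusion $\bigvee_{i=1}^n\hookrightarrow\bigvee_{i=1}^m$ followed by the pinch is the identity on $D_L(\sigma)$, so $r\circ s$ is the map of homotopy limits induced by the identity natural transformation, hence the identity (up to the coherent homotopies, which I would track using the standard fact that $\holim$ is a homotopy functor on the diagram category).

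The main obstacle I anticipate is the bookkeeping in Step~three: showing that the assignment $\sigma\mapsto$ (image of the chosen point of $D_L(\sigma\cap[n])$ in $D_K(\sigma)$) really is natural over all of $\cat(K)\op$, not just over $\cat(L)\op$. The subtlety is that for a face inclusion $\tau\subseteq\sigma$ in $K$ with, say, $\tau\cap[n]\subsetneq\sigma\cap[n]$, the structure map $D_K(\sigma)\to D_K(\tau)$ applies $f_i$ on summands with $i\in\sigma\setminus\tau$, and one must check this is compatible with first restricting $\sigma\cap[n]$ to $\tau\cap[n]$ inside $\cat(L)$ and then including into $D_K(\tau)$; this amounts to the commutativity of a diagram of wedges built from the $f_i$ and basepoint inclusions, which is straightforward summand-by-summand but tedious. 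A clean alternative, which I would likely adopt to streamline the write-up, is to package the whole argument categorically: exhibit $\cat(L)\op\hookrightarrow\cat(K)\op$ as having a left adjoint $\sigma\mapsto\sigma\cap[n]$ (equivalently, $\cat(L)\hookrightarrow\cat(K)$ has a right adjoint), observe that $D_K$ restricted along this adjoint is naturally isomorphic (on the relevant subcategory, after the pinch) to $D_L$, and invoke the standard fact that $\holim$ over a category admitting a reflective subcategory with the restricted diagram factoring appropriately yields a split surjection; then Step~four is immediate from the triangle identity of the adjunction. Either way, no essential use of simple connectivity or CW structure is needed beyond what makes $\holim$ a homotopy functor, matching the polyhedral-product statement.
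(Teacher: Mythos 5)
Your proposal is correct and follows essentially the same route as the paper: the section is built from the functor $\cat(K)\op \to \cat(L)\op$, $\sigma \mapsto \sigma\cap[n]$ (well defined precisely because $L$ is full), combined with the wedge inclusions $\bigvee_{i=1}^n Y_i \hookrightarrow \bigvee_{i=1}^m Y_i$, and the composite with the retraction is checked to be induced by identity natural transformations; the paper merely makes this explicit by factoring both maps through the intermediate diagram $D_L^K$ on $\cat(L)\op$, which is the rigorous packaging you sketch in your ``clean alternative.'' (Only a cosmetic slip: in your opening paragraph the coreflectivity relevant to polyhedral products concerns $\cat(L)\subset\cat(K)$, not their opposites.)
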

\begin{proof}

 Let $D_K$ and $D_L$ be the diagrams defining $\underline{f}^K_{\mathrm{co}}$ and $\underline{f}_{\mathrm{co}}^L$ respectively. Recall from the proof of Theorem~\ref{thm:incinducedmap} the diagram $D_L^K$ indexed by $\cat(L)\op$, which is defined by $D_L^K(\sigma) = \bigvee\limits_{i=1}^m Y_i(\sigma)$, where $Y_i(\sigma) = X_i$ if $i \in \sigma$, and $Y_i(\sigma) = A_i$ if $i \notin \sigma$. Define a natural transformation $D_L \rightarrow D_L^K$ by the inclusion \[D_L(\sigma) = \bigvee\limits_{i=1}^n Y_i \hookrightarrow \bigvee\limits_{i=1}^m Y_i = D_L^K(\sigma)\] This induces a map $\displaystyle \underline{f}^L_{\mathrm{co}} \xrightarrow{f} \holim_{\cat(L)\op} D_L^K$. Define a functor $F: \cat(K)\op \rightarrow \cat(L)\op$ by sending $\sigma \in K$ to the face $\tau \in L$, where $\tau$ is obtained from $\sigma$ by removing any instances of the vertices $\{n+1,\ldots,m\}$. Since $L$ is a full subcomplex, $F$ is well defined. The functor $F$ induces a map $\displaystyle \holim_{\cat(L)\op} D_L^K \xrightarrow{g} \underline{f}_{\mathrm{co}}^K$. Therefore, we obtain a composite \[\underline{f}^L_{\mathrm{co}} \xrightarrow{f} \holim_{\cat(L)\op} D_L^K \xrightarrow{g} \underline{f}_{\mathrm{co}}^K.\]

Now consider the composite \[\phi:\underline{f}^L \xrightarrow{f} \holim_{\cat(L)\op} D_L^K \xrightarrow{g} \underline{f}_{\mathrm{co}}^K \xrightarrow{h} \holim_{\cat(L)\op} D_L^K \xrightarrow{k} \underline{f}_{\mathrm{co}}^L,\] where the composite $k\circ h\colon \underline{f}_{\mathrm{co}}^K \to \underline{f}_{\mathrm{co}}^L$ is defined as in Theorem~\ref{thm:incinducedmap}. By definition of the functor $F$, the composite $\cat(L)\op \hookrightarrow \cat(K)\op \xrightarrow{F} \cat(L)\op$ is the identity, and so the composite $h \circ g$ is the identity. For a face $\sigma$, the natural transformation inducing the composite $k \circ f$ is the identity on $D_L^K(\sigma)$, and so $k \circ f$ is the identity. Hence, $\phi$ is the identity map, and so the composite $g \circ f$ is a right homotopy inverse for the map induced by $L \rightarrow K$.
\end{proof}

\subsection{Homotopy cofibrations}

For polyhedral products, if $K$ is a simplicial complex on $[m]$, it was shown in \cite[Lemma~2.3.1]{DS} that there exists a homotopy fibration \[\clxx^K \rightarrow \ux^K \rightarrow \prod\limits_{i=1}^m X_i,\] which splits after looping. More generally, it was shown in \cite[Theorem 2.1]{HST} that there is a homotopy fibration \[\cyy^K \rightarrow \uxa^K \rightarrow \prod\limits_{i=1}^m X_i,\] where $Y_i$ is the homotopy fibre of the inclusion $A_i \rightarrow X_i$. Note that the map $\uxa^K \rightarrow \prod\limits_{i=1}^m X_i$ is induced by the inclusion $K \rightarrow \Delta^{m-1}$. Moreover, the second homotopy fibration above also splits after looping, giving a homotopy equivalence \[\Omega \uxa^{K} \simeq \prod\limits_{i=1}^m \Omega X_i \times \Omega \cyy^K.\] This implies that to understand the loop spaces of polyhedral products, and therefore their homotopy groups, it suffices to study polyhedral products of the form $\Omega \cyy^K$. Loop space decompositions of certain polyhedral products of this form have been studied, for example, in \cite{PT,S}. 

For polyhedral coproducts, by considering the map induced by the inclusion $K \rightarrow \Delta^{m-1}$, one might hope there is a homotopy cofibration \[\bigvee\limits_{i=1}^m X_i \rightarrow \ux^K_{\mathrm{co}} \rightarrow (\underline{P \Sigma X},\underline{\Sigma X})_{co}^K,\] or more generally, \[\bigvee\limits_{i=1}^m X_i \rightarrow \uxa^K_{\mathrm{co}} \rightarrow (\underline{PY},\underline{Y})_{co}^K,\] where $Y_i$ is the homotopy cofibre of $f_i\colon X_i \rightarrow A_i$. This would allow us to understand the suspension of polyhedral coproducts, and therefore their homology. However, if $K$ is two disjoint points, by part $(1)$ of Example~\ref{ex:pointcase}, the map $X_1 \vee X_2 \rightarrow \ux^K_{co}$ is the inclusion $X_1 \vee X_2 \rightarrow X_1 \times X_2$. The homotopy cofibre of this map is $X_1 \wedge X_2$, but Example~\ref{ex:cojoin} implies that $(\underline{P \Sigma X},\underline{\Sigma X})_{co}^K \simeq \Omega \Sigma(\Omega \Sigma X_1 \wedge \Omega \Sigma X_2)$. This is reminiscent of how Ganea's theorem~\cite[Theorem~1.1]{G} does not dualise canonically; see~\cite[Remark~3.5]{G}. This gives rise to the following problem.

\begin{problem}
    For certain classes of polyhedral coproducts, determine a decomposition for their suspensions.
\end{problem}

\section{Preliminary Results}
\subsection{Preliminary decompositions}
\label{sec:loopspacedecomp}

To decompose the loop space of a polyhedral coproduct, we will use a result known as the Porter decomposition. Let $K$ be $m$ disjoint points. By \cite[Lemma~2.3.1]{DS}, there is a homotopy fibration 
\begin{equation} \label{DS_fib}
\clxx^K \rightarrow \bigvee\limits_{i=1}^m X_i \xrightarrow{i} \prod\limits_{i=1}^m X_i.
\end{equation}
A result of Porter~\cite[Theorem~1]{P} identifies the homotopy type of $\clxx^K$ in the case that each $X_i$ is simply connected. For a pointed space $X$ and $k \geq 1$, let $X^{\vee k}$ be the $k$-fold wedge of $X$.

\begin{theorem}
\label{htpyfibwedgeintoprod}
    Let $X_1,\ldots,X_m$ be pointed, simply connected CW-complexes, and let $K$ be $m$ disjoint points. There is a homotopy equivalence \[\clxx^K \simeq \bigvee\limits_{k=2}^m \bigvee\limits_{1 \leq i_1 < \cdots < i_k \leq m} (\Sigma \Omega X_{i_1} \wedge \cdots \wedge  \Omega X_{i_k})^{\vee (k-1)}.\] Moreover, this homotopy equivalence is natural for maps $X_i \rightarrow Y_i$.
    \qed
\end{theorem}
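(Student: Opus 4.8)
This homotopy equivalence is due to Porter \cite[Theorem~1]{P}; the plan is to recall a proof of it and to observe that the equivalence can be taken to be natural. By the homotopy fibration \eqref{DS_fib} (that is, \cite[Lemma~2.3.1]{DS}), the space $\clxx^K$ is the homotopy fibre of the inclusion $i\colon\bigvee_{j=1}^m X_j\to\prod_{j=1}^m X_j$, so it suffices to decompose $\operatorname{hofib}(i)$. I would argue by induction on $m$. The case $m=1$ is trivial, and the case $m=2$ is Ganea's theorem, $\operatorname{hofib}(X_1\vee X_2\to X_1\times X_2)\simeq\Sigma\Omega X_1\wedge\Omega X_2$, as in \cite[Theorem~1.1]{G} (see also \cite[p.~302]{G}); this equivalence is produced naturally in $X_1$ and $X_2$, for instance by applying Mather's cube lemma to the pushout square presenting $X_1\vee X_2$ as the double mapping cylinder of $X_1\leftarrow *\to X_2$, so naturality will be available at every stage.

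For the inductive step I would set $W'=\bigvee_{j=1}^{m-1}X_j$, let $K'$ denote $m-1$ disjoint points (on $\{1,\dots,m-1\}$), and factor $i$ as $W'\vee X_m\xrightarrow{q}W'\times X_m\xrightarrow{p}\prod_{j=1}^m X_j$, where $p=(W'\hookrightarrow\prod_{j=1}^{m-1}X_j)\times\mathrm{id}_{X_m}$. Passing to homotopy fibres gives a homotopy fibration $\operatorname{hofib}(q)\to\clxx^K\to\clxx^{K'}$ with $\operatorname{hofib}(q)\simeq\Sigma\Omega W'\wedge\Omega X_m$ by the $m=2$ case. The inclusion $W'\hookrightarrow W'\vee X_m$ splits the base map, so after looping the fibration splits:
\[\Omega\clxx^K\simeq\Omega(\Sigma\Omega W'\wedge\Omega X_m)\times\Omega\clxx^{K'}.\]
Next I would substitute the inductive description of $\clxx^{K'}$, use that $\Omega W'\simeq\prod_{j=1}^{m-1}\Omega X_j\times\Omega\clxx^{K'}$ (another looped, split instance of \eqref{DS_fib}), and repeatedly apply the suspension splitting of a product together with the James splitting $\Sigma\Omega\Sigma Z\simeq\bigvee_{k\geq1}\Sigma Z^{\wedge k}$, to rewrite the right-hand side as $\Omega$ of a wedge of suspensions of smash products of the $\Omega X_j$. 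Matching summands should yield $\Omega$ of the claimed wedge; in particular the smash $\Sigma\Omega X_{i_1}\wedge\cdots\wedge\Omega X_{i_k}$ over all $k$ chosen coordinates would appear with multiplicity $1+(k-2)=k-1$ — one copy from the $\prod\Omega X_j$ factor of $\Omega W'$ and $k-2$ from the top stratum of $\clxx^{K'}$ by induction — which accounts for the exponents in the statement. Finally, since a wedge of simply connected suspensions is a suspension, I would de-loop the equivalence, e.g.\ by checking that the constructed map between these two simply connected co-$H$-spaces is a homology isomorphism.

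The hard part will be exactly this de-looping, together with the bookkeeping that precedes it: the natural inductive fibration $\operatorname{hofib}(q)\to\clxx^K\to\clxx^{K'}$ does \emph{not} split as a wedge on the space level, since $\operatorname{hofib}(q)\simeq\Sigma\Omega W'\wedge\Omega X_m$ carries ``junk'' — higher smash powers of the $\Omega X_j$ coming from the James filtration of $\Omega\clxx^{K'}$ — that is annihilated inside $\clxx^K$. One is thus forced to loop, reorganise the resulting infinite product, and only then recognise the (finite) de-looped homotopy type. Naturality, on the other hand, should come essentially for free: the $m=2$ equivalence, the splitting of \eqref{DS_fib}, the James splitting, and the product suspension splitting are all natural for maps $X_j\to Y_j$, so the composite equivalence is natural as well.
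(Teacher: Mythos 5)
The paper does not prove this statement at all: it is quoted as Porter's theorem \cite[Theorem~1]{P} (together with an assertion of naturality) and stated with a \qed. So you are not deviating from the paper's argument --- you are attempting to supply a proof where the paper supplies a citation. Your overall strategy (induct on $m$ via the factorisation $W'\vee X_m\to W'\times X_m\to\prod_j X_j$, apply Ganea to the first map, split the resulting fibration of fibres after looping) is a reasonable and standard one, and the identities it produces are consistent: one can check on Poincar\'e series that $\Omega(\Sigma\Omega W'\wedge\Omega X_m)\times\Omega\clxx^{K'}$ and $\Omega$ of the claimed wedge agree. But as written there are two genuine gaps.

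First, the multiplicity bookkeeping is wrong. You claim the $k$-fold smash appears with multiplicity $1+(k-2)$, one copy from the $\prod_{j<m}\Omega X_j$ factor of $\Omega W'$ and $k-2$ from the top stratum of $\clxx^{K'}$. This ignores the cross terms in $\Sigma\bigl(\prod_{j<m}\Omega X_j\times\Omega\clxx^{K'}\bigr)\wedge\Omega X_m$ that mix product coordinates with \emph{lower} strata of $\clxx^{K'}$. Already for $m=k=4$ the summand $\Sigma\,\Omega X_1\wedge\Omega X_2\wedge\Omega X_3\wedge\Omega X_4$ occurs $1+2+3=6$ times in the expansion of $\Sigma\Omega W'\wedge\Omega X_4$ (the extra $3$ coming from $\Omega X_i\wedge(\Omega X_j\wedge\Omega X_l)\wedge\Omega X_4$ with the middle factor taken from a $2$-fold stratum of $\clxx^{K'}$), not $3$. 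This does not contradict the theorem --- the extra copies are matched on the other side by weight-two Hilton--Milnor brackets such as $[a_{\{1,2\}},a_{\{3,4\}}]$ in the expansion of $\Omega$ of the claimed wedge --- but it means the comparison must be made between the \emph{full} Hilton--Milnor expansions of both sides, which is a nontrivial combinatorial identity about Hall bases that your sketch does not establish. Second, and more seriously, the de-looping is not addressed. Your equivalence is assembled from loop multiplications and rearrangements of infinite products, so it is an abstract equivalence $\Omega\clxx^K\simeq\Omega(\text{wedge})$ that is not of the form $\Omega(f)$; there is no ``constructed map between these two simply connected co-$H$-spaces'' to test on homology. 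To de-loop one must first produce a genuine map of unlooped spaces --- e.g.\ a wedge of maps $\Sigma\,\Omega X_{i_1}\wedge\cdots\wedge\Omega X_{i_k}\to\clxx^K$ built from Whitehead products and the holonomy action --- and then prove it is a homology isomorphism; this is precisely the content of Porter's proof and is the step your argument defers. The naturality assertion, which is what the paper actually uses beyond Porter's statement, would indeed follow from naturality of the ingredients once such a map is constructed, but not before.
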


There is a special case of the naturality in Theorem~\ref{htpyfibwedgeintoprod} which will be important. Let $n<m$ and let $Y_i = X_i$ for $1 \leq i \leq n$, and let $Y_i = CX_i$ for $n+1 \leq i \leq m$. In this case, we obtain the following. 

\begin{proposition}
\label{naturalporterproj}
    Let $n < m$, and let $X_1,\ldots,X_m$ be pointed, simply connected CW-complexes. There is a homotopy commutative diagram \[\begin{tikzcd}
	{\bigvee\limits_{k=2}^{m} \bigvee\limits_{1 \leq i_1 < \cdots <i_k \leq m} \Sigma (\Omega X_{i_1} \wedge \cdots \wedge \Omega X_{i_k})^{\vee (k-1)}} & {\bigvee\limits_{i=1}^m X_i} & {\prod\limits_{i=1}^m X_i} \\
	{\bigvee\limits_{k=2}^{n} \bigvee\limits_{1 \leq i_1 < \cdots <i_k \leq n} \Sigma (\Omega X_{i_1} \wedge \cdots \wedge \Omega X_{i_k})^{\vee (k-1)}} & {\bigvee\limits_{i=1}^n X_i} & {\prod\limits_{i=1}^n X_i,}
	\arrow[hook, from=1-2, to=1-3]
	\arrow["p", from=1-2, to=2-2]
	\arrow[hook, from=2-2, to=2-3]
	\arrow["\pi", from=1-3, to=2-3]
	\arrow[from=1-1, to=1-2]
	\arrow[from=2-1, to=2-2]
	\arrow["{p'}", from=1-1, to=2-1]
\end{tikzcd}\] where $p$ and $p'$ are pinch maps and $\pi$ is the projection.
\qed
\end{proposition}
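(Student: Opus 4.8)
The plan is to deduce the diagram directly from the naturality clause of Theorem~\ref{htpyfibwedgeintoprod}, applied to a single, carefully chosen map of $m$-tuples. Throughout, $K$ denotes $m$ disjoint points, and we will pass at the end to the $n$-disjoint-points complex for the bottom row.

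First I would apply Theorem~\ref{htpyfibwedgeintoprod} to the $m$-tuple $(Y_1,\ldots,Y_m)$ defined by $Y_i = X_i$ for $1\le i\le n$ and $Y_i = CX_i$ for $n+1\le i\le m$. This produces a homotopy fibration whose total space is $\bigvee_{i=1}^m Y_i$, whose base is $\prod_{i=1}^m Y_i$, and whose fibre is naturally homotopy equivalent to $\bigvee_{k=2}^m \bigvee_{1\le i_1<\cdots<i_k\le m}(\Sigma\Omega Y_{i_1}\wedge\cdots\wedge\Omega Y_{i_k})^{\vee(k-1)}$. Next I would use the contractibility of each cone $CX_i$ to carry out three identifications. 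First, the collapse map $\bigvee_{i=1}^m Y_i \to \bigvee_{i=1}^n X_i$ is a homotopy equivalence, under which the pinch $\bigvee_{i=1}^m X_i \to \bigvee_{i=1}^m Y_i$ becomes the pinch map $p$ onto the first $n$ wedge summands. Second, the projection $\prod_{i=1}^m Y_i \to \prod_{i=1}^n X_i$ is a homotopy equivalence, under which $\prod_{i=1}^m X_i \to \prod_{i=1}^m Y_i$ becomes the projection $\pi$. Third, since $\Omega CX_i \simeq \ast$, every summand of the fibre decomposition above indexed by a set $\{i_1,\ldots,i_k\}$ containing some $i_j > n$ is contractible, so the fibre reduces to $\bigvee_{k=2}^{n}\bigvee_{1\le i_1<\cdots<i_k\le n}(\Sigma\Omega X_{i_1}\wedge\cdots\wedge\Omega X_{i_k})^{\vee(k-1)}$, which is precisely the bottom-left corner of the asserted diagram; and this identification is compatible with the fibration, realising the bottom row as the $n$-disjoint-points instance of~\eqref{DS_fib}.

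Finally, I would invoke the naturality statement of Theorem~\ref{htpyfibwedgeintoprod} for the maps $X_i \to Y_i$ (the identity when $i\le n$, the cone inclusion when $i>n$). This yields a homotopy commutative ladder between the fibration~\eqref{DS_fib} for $(X_1,\ldots,X_m)$ and the fibration just analysed for $(Y_1,\ldots,Y_m)$; under the three identifications above, the middle and right vertical maps become $p$ and $\pi$, and the left vertical map $p'$ is, summand by summand, the identity on each term $\Sigma(\Omega X_{i_1}\wedge\cdots\wedge\Omega X_{i_k})^{\vee(k-1)}$ with all indices $\le n$ and the constant map on each term containing an index $>n$ — that is, $p'$ is the pinch map collapsing the extra summands. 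Splicing the resulting homotopy commutative squares (and discarding the redundant contractible factors) gives exactly the claimed diagram.

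I expect the main obstacle to be bookkeeping rather than anything deep: one must check that the natural homotopy equivalences supplied by Theorem~\ref{htpyfibwedgeintoprod} for the two $m$-tuples are compatible with the collapse and projection equivalences of the second step, so that after all identifications the induced map on Porter decompositions is genuinely the pinch map, and not merely some map inducing the correct map in homology. This is precisely where the word ``natural'' in Theorem~\ref{htpyfibwedgeintoprod} is essential, and some care is needed to confirm that naturality is being applied to the correct maps $X_i \to Y_i$ and that the contractible summands are collapsed coherently across all three columns of the ladder.
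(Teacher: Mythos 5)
Your proposal is correct and takes essentially the same route as the paper: the paper obtains this proposition precisely as the special case of the naturality clause of Theorem~\ref{htpyfibwedgeintoprod} applied to the maps $X_i \to Y_i$ with $Y_i = X_i$ for $i \le n$ and $Y_i = CX_i$ for $n+1 \le i \le m$, after which the contractible cone factors are collapsed. Your write-up simply makes explicit the bookkeeping that the paper leaves implicit.
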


After looping the homotopy fibration~\eqref{DS_fib}, there is a natural right homotopy inverse $s$ of the canonical inclusion $i$, given by multiplying the inclusions $X_i \rightarrow \prod_{i=1}^m X_i$. The naturality of $s$ and the homotopy fibration in~Theorem~\ref{htpyfibwedgeintoprod} imply the following.

\begin{theorem}
\label{naturalporterdecomp}
    Let $X_1,\ldots,X_m$ be pointed, simply connected spaces. There is a homotopy equivalence \[\Omega \left(\bigvee\limits_{i=1}^m X_i\right) \simeq \prod\limits_{i=1}^m \Omega X_i \times \Omega \left(\bigvee\limits_{k=2}^m \bigvee\limits_{1 \leq i_1 < \cdots < i_k \leq m} (\Sigma \Omega X_{i_1} \wedge \cdots \wedge \Omega X_{i_k})^{\vee (k-1)}\right).\] Moreover, this homotopy equivalence is natural for maps $X_i \rightarrow Y_i$.
    \qed
\end{theorem}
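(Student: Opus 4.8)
The plan is to loop the homotopy fibration~\eqref{DS_fib} and split it by exhibiting a section. Applying $\Omega$ to~\eqref{DS_fib} yields a homotopy fibration
\[ \Omega \clxx^K \longrightarrow \Omega\Big(\bigvee_{i=1}^m X_i\Big) \xrightarrow{\ \Omega i\ } \Omega\Big(\prod_{i=1}^m X_i\Big) \simeq \prod_{i=1}^m \Omega X_i. \]
The map $s$ referred to just before the statement is the composite $\prod_{i=1}^m \Omega X_i \to \Omega\big(\bigvee_{i=1}^m X_i\big)$ obtained by looping each wedge-summand inclusion $X_j \hookrightarrow \bigvee_{i=1}^m X_i$ and multiplying the resulting maps $\Omega X_j \to \Omega\big(\bigvee_{i=1}^m X_i\big)$ via the loop multiplication on $\Omega\big(\bigvee_{i=1}^m X_i\big)$. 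Since the composite $X_j \hookrightarrow \bigvee_{i=1}^m X_i \xrightarrow{i} \prod_{i=1}^m X_i$ is the $j$-th coordinate inclusion and $\Omega i$ is an $H$-map, $\Omega i \circ s$ is homotopic to the identity of $\prod_{i=1}^m \Omega X_i$; thus $s$ is a right homotopy inverse of $\Omega i$.

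Next I would invoke the standard splitting principle: a homotopy fibration $F \xrightarrow{j} E \xrightarrow{p} B$ in which $E$ is a grouplike $H$-space and $p$ an $H$-map admitting a right homotopy inverse $s$ splits, the map $F \times B \to E$, $(x,b)\mapsto j(x)\cdot s(b)$, being a homotopy equivalence. (Using $pj \simeq \ast$ and $ps\simeq \mathrm{id}$ one checks from the long exact homotopy sequence of the fibration that this map induces isomorphisms on all homotopy groups, hence is an equivalence as all spaces are CW.) Applying this with $E=\Omega\big(\bigvee_{i=1}^m X_i\big)$, $B = \prod_{i=1}^m \Omega X_i$ and $F=\Omega\clxx^K$ gives $\Omega\big(\bigvee_{i=1}^m X_i\big)\simeq \Omega\clxx^K \times \prod_{i=1}^m \Omega X_i$, and substituting the identification of $\clxx^K$ from Theorem~\ref{htpyfibwedgeintoprod} yields the stated decomposition.

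For the naturality claim I would track each ingredient under a family of maps $X_i \to Y_i$: for fixed $K$ the fibration~\eqref{DS_fib} is natural in the $X_i$; the section $s$ is natural since it is built only from the natural inclusions $X_j \hookrightarrow \bigvee_i X_i$ and loop concatenation; and the identification of $\clxx^K$ is natural by the last sentence of Theorem~\ref{htpyfibwedgeintoprod}. Hence the composite equivalence commutes, up to homotopy, with the induced maps. The main point requiring care is checking that the splitting map $(x,b)\mapsto j(x)\cdot s(b)$ is a genuine homotopy equivalence and is compatible with the naturality maps, but since the loop multiplications and the section are functorial up to the usual coherences this is routine rather than a real obstacle.
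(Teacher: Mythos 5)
Your proof is correct and follows essentially the same route the paper intends: loop the fibration~\eqref{DS_fib}, split it using the multiplicative section $s$ built from the looped wedge-summand inclusions, and substitute Porter's identification of the fibre from Theorem~\ref{htpyfibwedgeintoprod}, with naturality tracked through each ingredient. The paper leaves these details implicit (stating the result with \qedsymbol), and your write-up simply makes the standard splitting argument explicit.
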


 For a subset $I = \{i_1,\cdots,i_k\} \subseteq [m]$ and pointed spaces $X_1,\cdots,X_m$, define $X^{\wedge I} = X_{i_1} \wedge \cdots \wedge X_{i_k}$.

\begin{remark}
\label{indexingofPorter}
    Observe that in Theorem~\ref{naturalporterdecomp}, the wedge summand in the right-hand product term can be indexed as \[\bigvee\limits_{I \subseteq [m], |I| \geq 2}  (\Sigma (\Omega X)^{\wedge I})^{\vee (|I|-1)}.\] 
\end{remark}
Now we recall the Hilton--Milnor theorem. Let $L$ be the free (ungraded) Lie algebra over $\mathbb{Z}$ on the elements $x_1,\ldots,x_m$, and let $B$ be a Hall basis of $L$. For a bracket $b \in B$, let $k_i(b)$ be the number of instances of $x_i$ in $b$. For a space $X$ and $k \geq 0$, denote by $X^{\wedge k}$ the $k$-fold smash of $X$. The following is from~\cite[Theorem 4]{Mi}, which is a generalisation of \cite[Theorem A]{Hi}. We will define the $0$-fold smash of $X$ to be omission of the corresponding term, rather than a trivial space.

\begin{theorem}
\label{HiltonMilnor}
    Let $X_1,\ldots,X_m$ be connected topological spaces. Then there is a homotopy equivalence \[\Omega\left( \bigvee\limits_{i=1}^m \Sigma X_i\right) \simeq \prod\limits_{b \in B} \Omega \Sigma(X_{1}^{\wedge k_1(b)} \wedge \cdots \wedge X_m^{\wedge k_m(b)}).\] Moreover, this homotopy equivalence is natural for maps $X_i \rightarrow Y_i$. 
    \qed
\end{theorem}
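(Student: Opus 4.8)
This is a classical result (originally due to Hilton and Milnor), so I only sketch the approach. The plan is to reduce the claim to a homology isomorphism: both sides of the asserted equivalence are loop spaces, hence $H$-spaces and in particular simple spaces, and a homology isomorphism between simple spaces is a weak homotopy equivalence, so it suffices to produce a natural map and show it induces an isomorphism on integral homology. A standard preliminary reduction, writing each $X_i$ as a filtered colimit of its finite subcomplexes, lets me assume the $X_i$ are finite complexes, so the relevant homology groups are finitely generated and it is enough to check the isomorphism over every field $k$.

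First I would recall the James splitting $\Sigma\Omega\Sigma Z\simeq\bigvee_{n\ge1}\Sigma Z^{\wedge n}$ for connected $Z$, which with the Bott--Samelson theorem identifies $H_*(\Omega\Sigma Z;k)$ with the tensor algebra $T(\widetilde H_*(Z;k))$. Applied to $Z=\bigvee_{i=1}^mX_i$, together with the splitting of $\big(\bigvee_iX_i\big)^{\wedge n}$ into a wedge over length-$n$ words in $X_1,\dots,X_m$, this gives $H_*\big(\Omega(\bigvee_i\Sigma X_i);k\big)\cong T(V_1\oplus\dots\oplus V_m)$ with $V_i=\widetilde H_*(X_i;k)$; applied to each factor on the left-hand side of the claimed equivalence it gives $\bigotimes_{b\in B}T(W_b)$ with $W_b=V_1^{\otimes k_1(b)}\otimes\dots\otimes V_m^{\otimes k_m(b)}$.

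Next I would construct the comparison map. For a Hall basis element $b$, the corresponding iterated bracket in $x_1,\dots,x_m$ prescribes an iterated (generalised) Whitehead product of the wedge inclusions $\Sigma X_i\hookrightarrow\bigvee_j\Sigma X_j$, that is, a map $\Sigma(X_1^{\wedge k_1(b)}\wedge\dots\wedge X_m^{\wedge k_m(b)})\to\bigvee_j\Sigma X_j$; adjointing, looping, and multiplying all of these together via the loop multiplication of $\Omega(\bigvee_j\Sigma X_j)$ (in each degree only finitely many factors are nonconstant, so this is well defined) would produce a natural map $\Theta\colon\prod_{b\in B}\Omega\Sigma(X_1^{\wedge k_1(b)}\wedge\dots\wedge X_m^{\wedge k_m(b)})\to\Omega\big(\bigvee_j\Sigma X_j\big)$. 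Under the identifications of the previous paragraph, $\Theta$ realises on homology the natural map $\bigotimes_{b\in B}T(W_b)\to T(V_1\oplus\dots\oplus V_m)$ carrying the generators of $T(W_b)$ to the iterated Samelson brackets indexed by $b$.

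It then remains to check the purely algebraic assertion that this map is an isomorphism of graded $k$-modules, namely the Poincar\'e--Birkhoff--Witt type decomposition $T(V_1\oplus\dots\oplus V_m)\cong\bigotimes_{b\in B}T(W_b)$, which I would deduce by induction on word length from the basis theorem for free Lie algebras. Alternatively, one can run Hilton's original geometric induction, which in each step replaces $\bigvee_{i=1}^m\Sigma X_i$ by a wedge assembled from $X_m$ and the smashes $X_m^{\wedge k}\wedge X_j$ (obtained as the homotopy fibre of a pinch map, computed via the relative James construction) and then recurses on the number of wedge summands. I expect the main obstacle to be the bookkeeping in this final step---matching the recursively generated indexing data with a Hall basis of the free Lie ring and keeping the various $H$-space multiplications coherent---rather than any single geometric input; naturality in the $X_i$ is immediate from the construction of $\Theta$.
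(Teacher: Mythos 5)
The paper gives no proof of this statement: it is quoted directly from Milnor \cite{Mi} (as a generalisation of Hilton \cite{Hi}) and stamped with a \qed, so there is no internal argument to compare against. Your sketch is essentially the classical proof as it appears in Milnor's paper and in Whitehead's book: build $\Theta$ from the iterated generalised Whitehead products indexed by a Hall basis, multiply the looped adjoints using the $H$-space structure of $\Omega\bigl(\bigvee_{i}\Sigma X_i\bigr)$, identify both sides in homology via James/Bott--Samelson, and reduce to the tensor-algebra splitting $T(V_1\oplus\cdots\oplus V_m)\cong\bigotimes_{b\in B}T(W_b)$; the alternative you mention is Hilton's original fibrewise induction. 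As an outline this is correct, and your treatment of the two usual pressure points (defining $\Theta$ on the weak product, where only finitely many factors are nonconstant in each degree, with the growing connectivity of the smash factors making the weak product weakly equivalent to the genuine one; and passing from a homology isomorphism to a weak equivalence via simplicity of loop spaces) is the standard one. The only caveats worth recording are that the statement as given for arbitrary connected topological spaces really yields a weak equivalence --- upgrading to a genuine homotopy equivalence uses that the spaces involved have CW homotopy type, which is how the theorem is actually applied in this paper --- and that the naturality assertion, which the paper relies on in Corollary~\ref{HiltonMilnorproj}, should be recorded as naturality of the single map $\Theta$ (not merely of the equivalence up to homotopy); your construction does deliver this, since Whitehead products and the loop multiplication are natural.
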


As in the case of the Porter decomposition, there is a special case which will be important. Let $n<m$ and let $Y_i = X_i$ for $1 \leq i \leq n$, and let $Y_i = CX_i$ for $n+1 \leq i \leq m$. By contracting out the $CX_i$ terms, we obtain the following. 

\begin{corollary}
\label{HiltonMilnorproj}
    Let $n < m$, let $B_n$ be a Hall basis on the free Lie algebra generated by $x_1,\ldots,x_n$, and let $B_m$ be a Hall basis on the free Lie algebra generated by $x_1,\ldots,x_m$. Then the diagram \[\begin{tikzcd}
	{\Omega\Sigma\left(\bigvee\limits_{i=1}^m X_i\right)} & {\prod\limits_{b \in B_m} \Omega \Sigma(X_{1}^{\wedge k_1(b)} \wedge \cdots \wedge X_m^{\wedge k_m(b)})} \\
	{\Omega\Sigma\left(\bigvee\limits_{i=1}^n X_i\right)} & {\prod\limits_{b \in B_n} \Omega \Sigma(X_{1}^{\wedge k_1(b)} \wedge \cdots \wedge X_n^{\wedge k_n(b)}).}
	\arrow["{\simeq }", from=1-1, to=1-2]
	\arrow["\pi", from=1-2, to=2-2]
	\arrow["{\Omega p}", from=1-1, to=2-1]
	\arrow["{\simeq }"', from=2-1, to=2-2]
\end{tikzcd}\] homotopy commutes. \qed

\end{corollary}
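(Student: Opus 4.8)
The plan is to deduce the diagram by applying the naturality clause of Theorem~\ref{HiltonMilnor} to the family of maps $g_i\colon X_i \to Y_i$ defined by $g_i = \mathrm{id}_{X_i}$ for $1 \le i \le n$ and by $g_i\colon X_i \hookrightarrow CX_i$, the inclusion of the base of the cone, for $n+1 \le i \le m$, and then contracting out the contractible pieces that appear. Applying the naturality in Theorem~\ref{HiltonMilnor} to $(g_i)_{i=1}^m$ produces a homotopy-commutative square whose top and bottom rows are the Hilton--Milnor equivalences for $(X_1,\ldots,X_m)$ and $(Y_1,\ldots,Y_m)$ respectively, whose left-hand map is $\Omega\Sigma\bigl(\bigvee_{i=1}^m g_i\bigr)$, and whose right-hand map is the product over $b \in B_m$ of the maps $\Omega\Sigma\bigl(g_1^{\wedge k_1(b)} \wedge \cdots \wedge g_m^{\wedge k_m(b)}\bigr)$.

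I would then simplify each corner of this square. Since each $CX_i$ is contractible, the subwedge $\bigvee_{i=n+1}^m \Sigma CX_i$ of $\bigvee_{i=1}^m \Sigma Y_i$ is contractible, and collapsing it gives a homotopy equivalence $\bigvee_{i=1}^m \Sigma Y_i \xrightarrow{\simeq} \bigvee_{i=1}^n \Sigma X_i$; composing $\bigvee_{i=1}^m \Sigma g_i$ with this collapse is exactly the pinch map $p$, so after looping the left-hand map of the square becomes $\Omega p$. On the right-hand side, if a Hall word $b \in B_m$ involves a generator $x_j$ with $j > n$, then the smash $Y_1^{\wedge k_1(b)} \wedge \cdots \wedge Y_m^{\wedge k_m(b)}$ has a factor $(CX_j)^{\wedge k_j(b)}$ and is therefore contractible, so the corresponding factor of $\prod_{b \in B_m}\Omega\Sigma(Y_1^{\wedge k_1(b)} \wedge \cdots \wedge Y_m^{\wedge k_m(b)})$ is contractible; whereas for $b \in B_n$ this factor is literally $\Omega\Sigma(X_1^{\wedge k_1(b)} \wedge \cdots \wedge X_n^{\wedge k_n(b)})$. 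Projecting away the contractible factors is thus a homotopy equivalence onto $\prod_{b \in B_n}\Omega\Sigma(X_1^{\wedge k_1(b)} \wedge \cdots \wedge X_n^{\wedge k_n(b)})$, and under this identification the right-hand map of the square becomes the projection $\pi$.

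The one step that requires care, and the one I expect to be the main (though routine) obstacle, is checking that the bottom row of the square after these simplifications, namely the Hilton--Milnor equivalence for $(Y_1,\ldots,Y_m)$ followed by the projection away from the contractible factors, is homotopic to the Hilton--Milnor equivalence for $(X_1,\ldots,X_n)$; only then does the simplified square coincide with the one in the statement. This is precisely where the compatible choice of Hall bases $B_n \subseteq B_m$ is used: in the construction of the equivalence in Theorem~\ref{HiltonMilnor}, the coordinate onto the factor indexed by a word $b \in B_n$ is assembled from $X_1,\ldots,X_n$ alone and is unchanged when $X_{n+1},\ldots,X_m$ are replaced by their cones. (Equivalently, one may apply the naturality of Theorem~\ref{HiltonMilnor} a second time, to the collapse maps $CX_i \to \ast$ for $i > n$, reducing the claim to the evident identification of the Hilton--Milnor decomposition of $(X_1,\ldots,X_n,\ast,\ldots,\ast)$ with that of $(X_1,\ldots,X_n)$.) Everything else is a formal consequence of the naturality recorded in Theorem~\ref{HiltonMilnor} together with the contractibility of any smash product having a cone as a factor.
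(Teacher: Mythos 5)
Your proposal is correct and follows the same route as the paper: the paper's entire justification is the sentence preceding the corollary, namely to apply the naturality clause of Theorem~\ref{HiltonMilnor} to the maps $X_i \to Y_i$ with $Y_i = X_i$ for $i \le n$ and $Y_i = CX_i$ for $i > n$, and then contract out the contractible $CX_i$ terms. Your additional care about the compatibility $B_n \subseteq B_m$ of Hall bases and the identification of the simplified bottom row is a point the paper leaves implicit, and it is handled correctly.
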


\subsection{Preliminary homotopy limit decompositions} 
 In this section, we prove some decompositions of certain homotopy limits indexed by the opposite of the face category of a simplicial complex. The first lemma is the dual statement of the ``Wedge Lemma" from \cite[Proposition~3.5]{WZZ}. 
 
\begin{lemma}
\label{lem:initialDiagramLemma}
    Let $K$ be a simplicial complex. Let $X$ be a space and let $\calD\colon \catK^{\mathrm{op}} \to \mathbf{Top}_* $ be a diagram with $\calD(\varnothing) = X$ and $\calD(\sigma)=*$ for all $\sigma \neq \varnothing$. Then 
    \begin{equation*}
        \holim_{\catK\op} \calD \simeq \Map_*(\Sigma |K|, X).  
    \end{equation*}
\end{lemma}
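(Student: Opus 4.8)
The plan is to compute $\holim_{\catK\op}\calD$ directly from its description as a space of natural transformations (equivalently, as an end), and to identify this space with a mapping space out of the homotopy colimit of the constant diagram. The key observation is that the diagram $\calD$ is supported only at the initial object $\varnothing$ of $\catK$ (which is terminal in $\catK\op$), with value $X$ there and $*$ elsewhere. Such a diagram is ``corepresentable" in a homotopical sense: it is weakly equivalent to the diagram $\sigma \mapsto \Map_*(B(\sigma/\varnothing)_{\text{something}}, X)$ — more precisely, I want to exhibit $\calD$ as the composite of the mapping-space functor $\Map_*(-,X)$ with a diagram of spaces built from the nerve. The cleanest route is to dualise the Wedge Lemma \cite[Proposition~3.5]{WZZ} exactly as the statement advertises.

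Concretely, the steps I would carry out are: (1) Recall that for a diagram $\calF\colon\calC\to\mathbf{Top}_*$ one has the standard formula $\holim_{\calC}\calF = \Map_{\calC}(\ast, \calF)$ where the homotopy limit is modelled by the totalization/cosimplicial replacement, and there is an adjunction-type identity $\Map_*(\hocolim_{\calC\op} G,\ X) \cong \holim_{\calC}\Map_*(G(-),X)$ for a diagram $G\colon\calC\op\to\mathbf{Top}_*$, natural in everything. (2) Apply this with $G\colon\catK\to\mathbf{Top}_*$ the diagram that is $S^0$ on $\varnothing$ and $\ast$ on every nonempty face, so that $\Map_*(G(\sigma),X)$ is $X$ when $\sigma=\varnothing$ and $\ast$ otherwise — i.e., $\Map_*(G(-),X)=\calD$ on the nose. (3) Identify $\hocolim_{\catK}G$. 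Since $G$ is $S^0$ at the initial object and trivial elsewhere, its homotopy colimit over $\catK$ is the cofibre of the map from the homotopy colimit of the ``punctured" diagram (supported away from $\varnothing$, hence contractible, built on the nerve of $\catK\setminus\{\varnothing\}$) — and one computes, exactly as in the proof of the Wedge Lemma, that $\hocolim_{\catK}G \simeq S^0 \wedge \big(\Sigma|K|\big)$ up to a shift, i.e. $\simeq \Sigma|K|$. The point is that the nerve of $\catK$ is contractible (it has an initial object) while the nerve of $\catK\setminus\{\varnothing\}$ is the barycentric subdivision of $|K|$, so the relevant (co)fibre computes the unreduced suspension $\Sigma|K|$. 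Combining (2) and (3) gives $\holim_{\catK\op}\calD \simeq \Map_*(\Sigma|K|,X)$.

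The main obstacle I anticipate is being careful about pointedness and basepoints throughout: $\catK$ has an initial object so $|\catK|$ is contractible, but one must track which vertex of the nerve plays the role of the basepoint/cone point so that the quotient $|\catK|/|\catK\setminus\{\varnothing\}|$ is genuinely the \emph{reduced} suspension $\Sigma|K|$ rather than the unreduced one, and so that the identification $\Map_*(G(\sigma),X)=\calD(\sigma)$ respects basepoints strictly. A secondary technical point is justifying the adjunction $\Map_*(\hocolim G, X)\simeq\holim\Map_*(G(-),X)$ at the level of homotopy (co)limits rather than strict ones — this is standard (it follows from $\Map_*(-,X)$ sending the bar-construction model of $\hocolim$ to the cosimplicial-replacement model of $\holim$), but it should be cited or spelled out. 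Once these bookkeeping issues are handled, the result is a formal consequence of the cofibre computation for $\hocolim_{\catK}G$, which is itself the Eckmann--Hilton dual of the computation underlying \cite[Proposition~3.5]{WZZ}. Alternatively, one could prove it by induction on the faces of $K$, building $K$ up one face at a time and using that each face addition corresponds to a homotopy pushout of nerves (hence a homotopy pullback after applying $\Map_*(-,X)$ via the $\holim$), but I expect the direct nerve computation to be cleaner.
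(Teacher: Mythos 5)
Your proposal is correct, but it takes a genuinely different route from the paper's proof. You work on the colimit side: you exhibit $\calD$ as $\Map_*(G(-),X)$ for the ``corepresenting'' diagram $G\colon \catK \to \mathbf{Top}_*$ with $G(\varnothing)=S^0$ and $G(\sigma)=*$ otherwise, and then compute $\hocolim_{\catK} G \simeq \Sigma|K|$, so that the adjunction $\Map_*(\hocolim G, X)\simeq \holim \Map_*(G(-),X)$ finishes the argument. The paper instead stays on the limit side throughout: it writes $\calD$ as an objectwise homotopy pullback $\underline{*}\to\mathcal{X}\leftarrow\underline{X}$, uses right Kan extensions to identify $\holim_{\catK\op}\mathcal{X}\simeq\Map(|K|,X)$, and then recognises the resulting homotopy pullback $\holim(*\to\Map(|K|,X)\leftarrow X)$ as the homotopy fibre of the constant-maps section of the evaluation fibration, which is $\Omega\Map_*(|K|,X)\simeq\Map_*(\Sigma|K|,X)$. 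Your approach produces the suspension directly as a cofibre on the nerve level and makes the Eckmann--Hilton duality with the Welker--Ziegler--\v{Z}ivaljevi\'c computation more transparent; the paper's approach avoids identifying $\hocolim G$ but pays for it with the fibration argument at the end. One point to tighten in your step (3): as phrased, the ``punctured'' diagram (the restriction of $G$ to $\catK\setminus\{\varnothing\}$, constantly $*$) has contractible hocolim, so the cofibre of the map out of it is not the right one. The correct cofibre sequence comes from writing $G$ as the cofibre of the map of diagrams $G'\to\underline{S^0}$, where $G'$ is $*$ at $\varnothing$ and $S^0$ elsewhere; since pointed homotopy colimits commute with cofibres, one gets $\hocolim_{\catK}G\simeq\mathrm{cofib}\bigl(|K|_+\to S^0\bigr)\simeq\Sigma|K|$, using that $N(\catK)$ is contractible and $N(\catK_{>\varnothing})$ is the barycentric subdivision of $|K|$ --- exactly the ingredients you name. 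With that correction, and the pointedness bookkeeping you already flag, the argument is complete.
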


\begin{proof}
      Let $\catK\op_{>\varnothing}$ denote the subposet category consisting of all $\sigma \in K$ where $\sigma \neq \varnothing$. For a topological space $A$, let $\underline{A} \colon \catK^{\mathrm{op}} \to \mathbf{Top}_*$ be the diagram with constant value $A$. The opposite category $(\catK\op_{>\varnothing})\op$ is the category $\catK_{<\varnothing}$ and its geometric realization coincides with the realization of $K$ as a topological space
     \begin{equation*}
         |K| \simeq \hocolim_{\catK_{<\varnothing}} \underline{*}.
     \end{equation*} 
       Thus there are homotopy equivalences 
     \begin{equation*}
         \Map(|K|,X)\simeq \Map( \hocolim_{\catK_{<\varnothing}} \underline{*},X)\simeq \holim_{\catK\op_{>\varnothing}}\Map(*,X) \simeq \holim_{\catK\op_{>\varnothing}} \underline{X}.
     \end{equation*} 

     Let~$\mathcal{X}\colon \catK^{\mathrm{op}} \to \mathbf{Top}_*$ be the diagram with $\mathcal{X}(\varnothing)=*$ and $\mathcal{X}(\sigma)=X$ for all $\sigma \neq \varnothing$. The diagram~$\calD$ can be written as the (homotopy) pullback 
     \begin{equation}
        \label{eq:diagramPullback}
         \underline{*}\longrightarrow \mathcal{X} \longleftarrow \underline{X},
     \end{equation} where the right-hand map is the constant map to the basepoint for $\sigma = \varnothing$, and the identity on $X$ for $\sigma \neq \varnothing$. The left-hand map is the inclusion of the basepoint for each $\sigma \in K$. 
     
Let $\mathcal{Z}$ be the category with two objects, $1$ and $2$, and a morphism $f\colon 2 \to 1$ in addition to the identity morphisms. Consider the diagram $\mathcal{Y}\colon \catK\op_{>\varnothing} \times \mathcal{Z}\to \mathbf{Top}_*$, with $\mathcal{Y}(\sigma,1) = *$  and $\mathcal{Y}(\sigma,2) = X$ for all $\sigma\neq \varnothing$. Let $F_1\colon \catK\op_{>\varnothing}\times \mathcal{Z} \to \catK\op$ be the functor sending $(\sigma,1) \mapsto \varnothing$ and $(\sigma,2) \mapsto \sigma$. Let $F_2 \colon \catK\op_{>\varnothing} \times \mathcal{Z} \to \cat(\Delta^0)\op$ be the functor sending  $(\sigma,1) \mapsto 1$ and $(\sigma,2) \mapsto 2$ for all $\sigma  \neq \varnothing$. The right Kan extension of $\mathcal{Y}$ along $F_1$ is $\mathcal{X}$ and the right Kan extension of $\mathcal{Y}$ along $F_2$ is  the diagram 
\begin{equation*}
    \Map(|K|,X) \to \Map(|K|,*).
\end{equation*}
Since homotopy limits are preserved by right Kan extensions, we obtain the equivalences
\begin{equation*}
        \holim_{\catK\op} \mathcal{X} \simeq \holim \left(\Map(|K|,X) \to \Map(|K|,*)\right) \simeq \Map(|K|,\holim(X \to *)) \simeq \Map(|K|,X).
    \end{equation*}
    %  The diagram $\mathcal{X}$ is equivalent to the diagram $(\catK^{\mathrm{op}}_{>\varnothing})_X \to *$. There is a homotopy equivalence $* \simeq \Map(|K|,*)$, thus the diagram $\mathcal{X}$ can be written as the homotopy limit of the diagram
    % $(\catK^{\mathrm{op}}_{>\varnothing})_X \to (\catK^{\mathrm{op}}_{>\varnothing})_*$. This is in fact an iterated homotopy limit, and we obtain homotopy equivalences 
    % \begin{equation*}
    %     \holim \mathcal{X} \simeq \holim \left(\Map(|K|,X) \to \Map(|K|,*)\right) \simeq \Map(|K|,\holim(X \to *)) \simeq \Map(|K|,X).
    % \end{equation*}
    Recall that the diagram $\calD$ was equivalent to diagram (\ref{eq:diagramPullback}). Using that $\displaystyle \holim_{\catK\op} \underline{*}$ is contractible and the previous observations about $\mathcal{X}$ yields the homotopy equivalence
    \begin{equation}
        \label{eq: outerSquare}
        \holim_{\catK\op} \calD\simeq \holim \left(* \longrightarrow \Map(|K|,X) \longleftarrow X\right).
    \end{equation}

    The map $X \rightarrow \Map(|K|,X)$ is a section for the evaluation map $ev_k\colon \Map(|K|,X) \rightarrow X$, where $k \in |K|$. In particular, there is a homotopy fibration diagram \[\begin{tikzcd}
	{\Omega \Map_*(|K|,X)} & {*} & {\Map_*(|K|,X)} \\
	F & X & {\Map(|K|,X)} \\
	& X & X,
	\arrow[from=1-1, to=1-2]
	\arrow["\simeq", from=1-1, to=2-1]
	\arrow[from=1-2, to=1-3]
	\arrow[from=1-2, to=2-2]
	\arrow[from=1-3, to=2-3]
	\arrow[from=2-1, to=2-2]
	\arrow[from=2-2, to=2-3]
	\arrow[equals, from=2-2, to=3-2]
	\arrow["ev_k", from=2-3, to=3-3]
	\arrow[equals, from=3-2, to=3-3]
\end{tikzcd}\] where the top right square is a homotopy pullback and $F$ is the homotopy limit of \eqref{eq: outerSquare}. Hence, there are homotopy equivalences \[\displaystyle \holim_{\catK\op} \calD \simeq F \simeq \Omega \mathrm{Map}_*(|K|,X) \simeq \mathrm{Map}_*(|K|,\Omega X) \simeq \mathrm{Map}_*(\Sigma |K|,X).\qedhere\]
%Consider the composition of squares
%    \begin{equation*}
%\begin{tikzcd}
%{\Map_*(\Sigma|K|,X)} \arrow[d] \arrow[rr] &  & {\Map(\Sigma|K|,X)} \arrow[d] \arrow[rr] &  & {\Map(*,X)} \arrow[d] \\
%* \arrow[rr]                              &  & {\Map(*,X)} \arrow[rr]                   &  & {\Map(|K|,X).}        
%\end{tikzcd}
%    \end{equation*}
%We wish to show that the outer square is a pullback. The right square is a pullback because $\Map(\hocolim_i A_i, Y) \simeq \holim_i \Map(A_i,Y)$ in the category of spaces. The left square is a pullback, and is the definition of $\Map_*(\Sigma|K|,X)$. By the pasting law for pullbacks, this implies that the outer square is a pullback. Note that the outer pullback square coincides with (\ref{eq: outerSquare}), and so we obtain a homotopy equivalence $\displaystyle \holim_{\sigma \in K} \calD(\sigma) \simeq \Map_*(\Sigma|K|,X)$.
\end{proof}

\begin{lemma}
    \label{lem:diagramContractingArrows}
    Let $K$ be a simplicial complex on $[m]$. Let $I \subseteq [m]$, and let $\calD\colon \catK^{\mathrm{op}} \to \mathbf{Top}_* $ be a diagram. Suppose that all maps induced by $\sigma \subset \tau$, where $\sigma$ is obtained from $\tau$ by removing a single vertex not contained in $I$, are identity maps. Then the homotopy limit of $\calD$ is equivalent to the homotopy limit of a diagram $\calD'\colon \cat(K_{I}) \to \mathbf{Top}_*$, where $\calD'(\sigma_I) = \calD(\sigma)$.
\end{lemma}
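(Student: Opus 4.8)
The plan is to identify $\holim_{\catK\op}\calD$ with a homotopy limit over $\cat(K_I)\op$ by means of the functor $\pi\colon\catK\op\to\cat(K_I)\op$ that sends a face $\sigma$ of $K$ to $\sigma_I:=\sigma\cap I$. Concretely, I will show (i) that $\calD$ is the pullback $\pi^{*}\calD'$ of a suitable diagram $\calD'$ on $\cat(K_I)\op$, and (ii) that $\pi$ is homotopy initial, so that passing to $\pi^{*}$ does not change the homotopy limit.

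First I would set up the functors. Since $K_I$ is a full subcomplex and $\sigma\cap I\subseteq\sigma\in K$, the assignment $\sigma\mapsto\sigma\cap I$ carries faces of $K$ to faces of $K_I$ and is inclusion-preserving, hence defines $\pi\colon\catK\op\to\cat(K_I)\op$. The inclusion of faces of $K_I$ among faces of $K$ is a full embedding $\iota\colon\cat(K_I)\op\hookrightarrow\catK\op$ with $\pi\iota=\mathrm{id}$, and I set $\calD':=\iota^{*}\calD$. The key elementary observation is that for any face $\sigma$ of $K$ the arrow $\sigma\to\sigma\cap I$ of $\catK\op$ is a composite of arrows, each deleting a single vertex of $\sigma\setminus I$; by hypothesis each such arrow is sent by $\calD$ to an identity, so $\calD(\sigma\to\sigma\cap I)=\mathrm{id}$, and in particular $\calD(\sigma)=\calD(\sigma_I)=\calD'(\sigma_I)$, which is the value asserted in the statement.

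Next I would check that $\calD=\pi^{*}\calD'$, equivalently $(\iota\pi)^{*}\calD=\calD$. On objects this is the identity $\calD(\sigma)=\calD(\sigma_I)$ just recorded. On an arrow $\tau\to\sigma$ of $\catK\op$ (that is, $\sigma\subseteq\tau$) one uses that the square in the poset $\catK\op$ with arrows $\tau\to\sigma$, $\tau\to\tau\cap I$, $\sigma\to\sigma\cap I$, $\tau\cap I\to\sigma\cap I$ commutes; applying $\calD$ and cancelling the two horizontal maps, which are identities by the previous step, gives $\calD(\tau\to\sigma)=\calD(\tau\cap I\to\sigma\cap I)=(\pi^{*}\calD')(\tau\to\sigma)$. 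It then remains to verify that $\pi$ is homotopy initial, i.e.\ that the comma category $\pi\downarrow\eta$ has contractible nerve for each face $\eta$ of $K_I$. Unwinding the definitions, an object of $\pi\downarrow\eta$ is a face $\sigma$ of $K$ together with a morphism $\sigma\cap I\to\eta$ of $\cat(K_I)\op$, which exists, and is then unique, exactly when $\eta\subseteq\sigma$; a morphism $\sigma\to\sigma'$ in $\pi\downarrow\eta$ is an arrow $\sigma\to\sigma'$ of $\catK\op$, i.e.\ a face inclusion $\sigma'\subseteq\sigma$. Thus $\pi\downarrow\eta$ is the poset of faces of $K$ containing $\eta$, ordered by reverse inclusion---the opposite of the face poset of $\mathrm{star}_K(\eta)$---and it has $\eta$ as a terminal object, hence contractible nerve. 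By the standard fact that homotopy-initial functors preserve homotopy limits, $\holim_{\catK\op}\calD=\holim_{\catK\op}\pi^{*}\calD'\simeq\holim_{\cat(K_I)\op}\calD'$, as required.

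The only step with genuine content is the equality $\calD=\pi^{*}\calD'$, which is precisely where the hypothesis on identity maps is used; granting that, homotopy-initiality of $\pi$ is formal, the relevant comma categories being cones (stars of faces).
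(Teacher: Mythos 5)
Your proof is correct and is in substance the same as the paper's: the paper simply observes that $\calD$ is the right Kan extension of $\calD'$ along the inclusion $\iota\colon\cat(K_I)\op\hookrightarrow\catK\op$ and that right Kan extensions preserve homotopy limits, which---since your projection $\pi$ is left adjoint to $\iota$, so that $\pi^{*}\calD'=\mathrm{Ran}_{\iota}\calD'$---amounts to your combination of the identity $\calD=\pi^{*}\calD'$ with the homotopy initiality of $\pi$. Your write-up supplies the details the paper leaves implicit (where the identity-map hypothesis is used, and why the relevant comma categories are contractible), which is a worthwhile expansion but not a different argument.
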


\begin{proof}
      The inclusion of $K_I \subset K$ induces a map of face categories $i \colon \cat(K_I)\op \to \cat(K)\op$. The diagram $\calD$ is the right Kan extension of $\calD'$ along $i$. Hence, $\calD$ and $\calD'$ have the same homotopy limit since right Kan extensions preserve homotopy limits. 
\end{proof}

\section{Loop spaces of polyhedral coproducts}
\label{sec:Loopspacesofcoprod}

\subsection{A general loop space decomposition}

In \cite[Definition 2.2]{BBCG2}, for a simplicial complex $K$, a construction known as the \emph{polyhedral smash product} is defined and denoted by $\widehat{\uxa}^K$. By \cite[Theorem 2.10]{BBCG2}, there is a homotopy equivalence \[\Sigma \uxa^K \simeq \bigvee\limits_{I \subseteq [m]} \Sigma \widehat{\uxa}^{K_I}.\] In this subsection, we show a dual statement for polyhedral coproducts. Recall that for spaces $X$ and $Y$ there is a (homotopy) cofibration $X \vee Y \rightarrow X \times Y \rightarrow X \wedge Y$. To dualise this, by \cite[p.302]{G}, there is a homotopy fibration $\Sigma (\Omega X \wedge \Omega Y) \rightarrow X \vee Y \rightarrow X \times Y$. This motivates the following definition.
\begin{definition}    
    The \emph{polyhedral smash coproduct} is defined as the homotopy limit 
    \begin{equation*}
       \underline{\hat{f}}^{K} _{\mathrm{co}} = \holim_{\catK\op}  \Sigma \hat{D}, \quad \text{where } \hat{D} =  \bigwedge_{i=1}^{m} \Omega Y_i \quad \text{and} \quad Y_i(\sigma) = \begin{cases}X_i & \text{if } i \in \sigma, \\ A_i & \text{if } i \not\in \sigma.\end{cases}
    \end{equation*}

    For a set of positive integers $N = \{k_1(N), \ldots, k_m(N) \}$, we define \emph{the weighted polyhedral smash coproduct} as 
    \begin{equation*}
      \underline{\hat{f}}^{K} _{N,\mathrm{co}} = \holim_{\catK\op}  \Sigma \hat{D}^N, \quad \text{where }   \hat{D}^N = \bigwedge_{i=1}^{m} (\Omega Y_i)^{\wedge k_i(N)} \quad \text{and} \quad Y_i(\sigma) = \begin{cases}X_i & \text{if } i \in \sigma, \\ A_i & \text{if } i \not\in \sigma.\end{cases}
    \end{equation*}
\end{definition}

Before stating the result, we set up some notation which will be used throughout the rest of Section~\ref{sec:Loopspacesofcoprod}. For a subset $I = \{i_1,\ldots,i_k\} \subseteq [m]$, let $S_I$ be the set \[\{a_{J,i}\:|\: J \subseteq I, |J| \geq 2, 1 \leq i \leq |J|-1\}.\] Denote by $B_I$ a Hall basis of the free ungraded Lie algebra on the set $S_{I}$. For a bracket $b \in B_I$ and $J \subseteq I$, let $b(J)$ be the sum of the number of instances of $a_{J,i}$ in $b$ for each $1 \leq i \leq |J|-1$. For $1\leq i \leq m$ and a bracket $b \in B_{[m]}$, define \begin{equation*}
    l_i(b) \coloneqq \sum_{I\subseteq [m], i \in I} b(I),
\end{equation*} which counts the number of instances of each vertex $i$ in the faces in $b$.
Let $L_b = (l_1(b), \ldots, l_m(b))$. For any $I \subseteq [m]$ and $b \in B_{[m]}$, define 
\begin{equation*}
    I_b \coloneqq I \cap \{ j \:|\:  1 \leq j \leq m,\; l_j(b) \neq 0\}.
\end{equation*} This set contains the elements of $I$ which appear in the subsets in $b$. To ensure that $\Omega X_i$ is connected in order to apply Theorem~\ref{HiltonMilnor}, we need the hypothesis that each $X_i$ is simply connected.

\begin{theorem}
    \label{thm:loopSplit}
   Let $f_i\colon X_i \rightarrow A_i$ be a map of pointed, simply connected CW-complexes for all $1 \leq i \leq m$. There is a homotopy equivalence 
\begin{equation*}
    \Omega \underline{f}^{K} _{\mathrm{co}} \simeq \prod_{i=1}^m  \Omega X_i \times  \prod_{b \in B_{[m]}} \Omega \underline{\hat{f}}^{K_{I_b}} _{L_b,\mathrm{co}}.
\end{equation*}
\end{theorem}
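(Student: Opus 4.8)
The plan is to imitate the BBCG strategy for the suspension splitting $\Sigma(\underline{X},\underline{A})^K\simeq\bigvee_{I\subseteq[m]}\Sigma\widehat{(\underline{X},\underline{A})}^{K_I}$, but in the Eckmann--Hilton dual direction, using the homotopy fibration $\Sigma(\Omega X\wedge\Omega Y)\to X\vee Y\to X\times Y$ of \cite{G} in place of the cofibration $X\vee Y\to X\times Y\to X\wedge Y$. First I would reduce to the case $A_i=X_i$, $f_i=\mathrm{id}$ modulo a splitting. More precisely, the map $\underline{f}^K_{\mathrm{co}}\to\holim_{\catK\op}\prod_{i=1}^m X_i=\prod_{i=1}^m X_i$ induced by the terminal natural transformation $\bigvee_i Y_i(\sigma)\to\prod_i X_i$ has a section after looping (multiply the loops of the inclusions $X_i\to\bigvee Y_i(\sigma)$, which are compatible over $\catK\op$ since each $f_i$ and each identity map respects the wedge summand structure). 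This peels off the factor $\prod_{i=1}^m\Omega X_i$ and leaves $\Omega\widehat{\underline{f}}^K_{\mathrm{co}}$ for the "reduced" part, which motivates why the polyhedral smash coproduct appears. I would then prove the analogue of the BBCG reduced statement: $\widehat{\underline f}^K_{\mathrm{co}}$ and its weighted variants appear levelwise from iterating the fibration over wedge summands.

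The core of the argument is a levelwise application of the Hilton--Milnor theorem (Theorem~\ref{HiltonMilnor}) to the diagram $D\colon\catK\op\to\mathbf{Top}_*$ with $D(\sigma)=\bigvee_{i=1}^m Y_i(\sigma)$. After looping, $\Omega D(\sigma)=\Omega\bigvee_{i=1}^m Y_i(\sigma)$; since each $X_i$ (hence each $A_i=$ an $X_i$ or a point, and in general a suspension is not available) — here I need to be careful: Hilton--Milnor requires the wedge summands to be suspensions, which they are not in general. The right move is instead to apply the naturality of the Porter decomposition (Theorem~\ref{naturalporterdecomp}, Remark~\ref{indexingofPorter}) levelwise: for each $\sigma$, $\Omega\bigvee_{i=1}^m Y_i(\sigma)\simeq\prod_{i=1}^m\Omega Y_i(\sigma)\times\Omega\big(\bigvee_{J\subseteq[m],|J|\ge2}(\Sigma(\Omega Y(\sigma))^{\wedge J})^{\vee(|J|-1)}\big)$, and then feed the remaining wedge of suspensions into Hilton--Milnor (Theorem~\ref{HiltonMilnor}). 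The key point is that Proposition~\ref{naturalporterproj} and Corollary~\ref{HiltonMilnorproj} guarantee these equivalences are natural for the maps $Y_i(\sigma)\to Y_i(\tau)$ in the diagram — which are either identities or the given $f_i$ — so that the product of these equivalences assembles into an equivalence of diagrams $\Omega D\simeq\prod_i\Omega Y_i(\,\cdot\,)\times\prod_{b\in B_{[m]}}\Omega\Sigma\hat D^{L_b}$, where each factor diagram $\Sigma\hat D^{L_b}\colon\catK\op\to\mathbf{Top}_*$ sends $\sigma\mapsto\Sigma\bigwedge_{i=1}^m(\Omega Y_i(\sigma))^{\wedge l_i(b)}$. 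Taking homotopy limits (which commute with finite products and with $\Omega$) and recognising $\holim_{\catK\op}\Sigma\hat D^{L_b}=\widehat{\underline f}^{K}_{L_b,\mathrm{co}}$ would give the stated splitting, except that I must further reduce $K$ to $K_{I_b}$.

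The reduction from $K$ to $K_{I_b}$ is where Lemma~\ref{lem:diagramContractingArrows} enters: in the factor diagram $\sigma\mapsto\Sigma\bigwedge_{i=1}^m(\Omega Y_i(\sigma))^{\wedge l_i(b)}$, any vertex $j$ with $l_j(b)=0$ contributes no smash factor, so the map induced by deleting such a vertex from a face is the identity; Lemma~\ref{lem:diagramContractingArrows} then collapses the indexing category to $\cat(K_{I_b})$ without changing the homotopy limit, yielding $\holim_{\cat(K_{I_b})\op}\Sigma\bigwedge_i(\Omega Y_i)^{\wedge l_i(b)}=\widehat{\underline f}^{K_{I_b}}_{L_b,\mathrm{co}}$. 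I would also need to match indexing sets: the Porter step produces wedge summands indexed by $J\subseteq[m]$ with $|J|\ge2$ and a multiplicity $1\le i\le|J|-1$ — exactly the generating set $S_{[m]}=\{a_{J,i}\}$ — and then Hilton--Milnor over the free Lie algebra on $S_{[m]}$ produces brackets $b\in B_{[m]}$ with $b(J)$ counting occurrences of the generators $a_{J,\cdot}$, so that $l_i(b)=\sum_{J\ni i}b(J)$ matches the definition in the text. The main obstacle I anticipate is purely bookkeeping: verifying that the levelwise Porter and Hilton--Milnor equivalences are genuinely natural over \emph{all} morphisms of $\catK\op$ simultaneously — not just the vertex-deletion maps handled by Proposition~\ref{naturalporterproj} and Corollary~\ref{HiltonMilnorproj} but arbitrary compositions — and that the identifications of the resulting diagrams with $\Sigma\hat D^{L_b}$ are compatible with the homotopy-limit functor. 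Once naturality is in hand, commuting $\holim$ past the (finite) product and past $\Omega$, and applying Lemma~\ref{lem:diagramContractingArrows}, the theorem follows.
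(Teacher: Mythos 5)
Your proposal follows essentially the same route as the paper's proof: commute $\Omega$ past the homotopy limit, apply the natural Porter decomposition (Theorem~\ref{naturalporterdecomp}, Remark~\ref{indexingofPorter}) levelwise, feed the resulting wedge of suspensions into Hilton--Milnor (Theorem~\ref{HiltonMilnor}) to produce the factor diagrams $\Omega\Sigma\hat{D}^{L_b}$, and then use Lemma~\ref{lem:diagramContractingArrows} to collapse the indexing category to $\cat(K_{I_b})\op$ (and, for the $\Omega Y_i$ factors, to $\cat(K_{\{i\}})\op$, giving $\Omega X_i$). The naturality worry you flag at the end is resolved exactly as you suggest, by the naturality clauses built into Theorems~\ref{naturalporterdecomp} and~\ref{HiltonMilnor}; your preliminary ``peel off a section'' reduction is not needed and is not used in the paper.
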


\begin{proof}

Since loops commute with homotopy limits, $\displaystyle \Omega \underline{f}^{K} _{\mathrm{co}} \simeq \holim_{\catK\op} \Omega D$.  By Theorem~\ref{naturalporterdecomp} and Remark \ref{indexingofPorter}, $\Omega D$ decomposes as a product
\begin{equation*}
   \Omega D \simeq \prod\limits_{i=1}^m \Omega Y_i \times  \Omega \Sigma \left(\bigvee_{I \subseteq  [m], |I| \geq 2} ((\Omega Y)^{\wedge I}) ^{\vee ({|I|-1})}\right).
\end{equation*}
We can apply the Hilton--Milnor theorem (Theorem~\ref{HiltonMilnor}) to the right-hand product term to obtain the equivalence
\begin{equation*}
   \Omega \left( \bigvee_{I \subseteq [m], |I| \geq 2}\Sigma ((\Omega Y)^{\wedge I}) ^{\vee ({|I|-1})} \right)\simeq \prod_{b \in B_{[m]}} \Omega \Sigma \left( \bigwedge_{I \subseteq [m], |I| \geq 2} ((\Omega Y)^{\wedge I})^{\wedge b(I)}\right)  .
\end{equation*}
Note that for any $b \in B_{[m]}$, by definition
\begin{equation*}
    \Sigma \left( \bigwedge_{I \subseteq [m], |I| \geq 2} ((\Omega Y)^{\wedge I})^{\wedge b(I)}\right) = \Sigma \hat{D}^{L_b}.
\end{equation*}
Using the loop space decomposition of $D$ we obtain a new description of $\Omega \underline{f}^{K} _{\mathrm{co}}$:
\begin{align*}
    \Omega \underline{f}^{K} _{\mathrm{co}} & \simeq \holim_{\catK\op} \Omega D 
     \simeq \holim_{\catK\op} \left( \prod_{i=1}^m \Omega Y_i \times \prod_{b \in B_{[m]}} \Omega \Sigma \hat{D}^{L_b} \right)\\
    &\simeq  \prod_{i=1}^m  \left( \holim_{\catK\op} \Omega Y_i \right) \times \prod_{b \in B_{[m]}} \left( \holim_{\catK\op} \Omega \Sigma\hat{D}^{L_b}  \right).
\end{align*}
Fix $i \in [m]$ and consider the term $\displaystyle \holim_{\catK\op} \Omega Y_i$. By Lemma \ref{lem:diagramContractingArrows} in the case $I = \{i\}$, there is an equivalence 
\begin{equation*}
    \holim_{\catK\op} \Omega Y_i \simeq \holim (\Omega X_i \to \Omega A_i) \simeq \Omega X_i.
\end{equation*}
For any $b \in B_{[m]}$ and any pair of simplices $\sigma \subset \tau$ where $\sigma$ is obtained from~$\tau$ by removing a vertex not in $I_b$, the induced maps $\Omega \Sigma \hat{D}^{L_b} (\sigma) \to \Omega \Sigma \hat{D}^{L_b}(\tau)$ are identity maps. Therefore, Lemma~\ref{lem:diagramContractingArrows} implies that
\begin{equation*}
    \holim_{\catK\op} \Omega\Sigma \hat{D}^{L_b} \simeq \holim_{\cat( K_{I_b})\op} \Omega \Sigma\hat{D}^{L_b}  \simeq \Omega \underline{\hat{f}}^{K_{I_b}} _{L_b,\mathrm{co}}.\qedhere
\end{equation*}
\end{proof}

\subsection{Loop space decompositions of \texorpdfstring{$(\underline{X},\underline{\ast})^K_{\mathrm{co}}$}{(X,*)co}}

For polyhedral products of the form $\ux^K$, by \cite[Theorem~2.15]{BBCG2}, there is a homotopy equivalence \[\Sigma \ux^K \simeq \bigvee\limits_{\sigma \in K} \Sigma X^{\wedge \sigma}.\] In this subsection, we prove a dual statement for polyhedral coproducts of the form $\ux^K_{\mathrm{co}}$. Let $\mathcal{F}$ and $\mathcal{M}$ be the set of faces and maximal faces of $K$ on two or more vertices, respectively. The following result could be shown using Theorem~\ref{thm:loopSplit} by showing that certain polyhedral smash coproducts are contractible in this case. However, this would require a technical argument involving choices of vector space bases for free Lie algebras. To avoid these technicalities, and make clearer the connection to Hall bases, we provide a proof using Corollary~\ref{HiltonMilnorproj}.

\begin{theorem}
\label{thm:loopsplituxcase}
    Let $X_1,\ldots,X_m$ be pointed, simply connected CW-complexes. There is a homotopy equivalence \[\Omega \ux^K_{\mathrm{co}} \simeq \prod\limits_{i=1}^{m} \Omega X_i \times \prod\limits_{b \in \bigcup\limits_{\sigma \in \mathcal{M}} B_{\sigma}} \Omega \Sigma \left(\bigwedge\limits_{\tau \in \mathcal{F}} ((\Omega X)^{\wedge \tau})^{\wedge b(\tau)}\right).\]
\end{theorem}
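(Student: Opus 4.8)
The plan is to rewrite $\Omega \ux^K_{\mathrm{co}}$ as a homotopy limit over $\catK\op$ of the looped wedges appearing in its defining diagram, to decompose each such looped wedge as a $\catK\op$-diagram using the Porter and Hilton--Milnor theorems, and then to evaluate the resulting homotopy limit factor by factor. Since $\Omega$ commutes with homotopy limits, $\Omega \ux^K_{\mathrm{co}} \simeq \holim_{\catK\op} \Omega D$, where $D(\sigma) = \bigvee_{i \in \sigma} X_i$ and, for a face inclusion $\sigma \subset \sigma'$, the structure map $D(\sigma') \to D(\sigma)$ is the pinch map collapsing the summands $X_i$ with $i \in \sigma' \setminus \sigma$.

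Next I would decompose the diagram $\Omega D$. Applied objectwise, Theorem~\ref{naturalporterdecomp} and Remark~\ref{indexingofPorter} give $\Omega D(\sigma) \simeq \prod_{i \in \sigma} \Omega X_i \times \Omega\big(\bigvee_{J \subseteq \sigma,\, |J| \geq 2} (\Sigma (\Omega X)^{\wedge J})^{\vee(|J|-1)}\big)$, and by the naturality of the Porter decomposition under pinch maps (Proposition~\ref{naturalporterproj}) these equivalences assemble into an equivalence of $\catK\op$-diagrams, the first factor having coordinate projections as structure maps and the second having pinch maps. Fix once and for all a Hall basis $B_{[m]}$ of the free Lie algebra on $S_{[m]}$, and for $\sigma \in K$ let $B_\sigma \subseteq B_{[m]}$ be the sub-basis of brackets involving only generators in $S_\sigma$; this is a Hall basis on $S_\sigma$, and $B_\sigma \subseteq B_{\sigma'}$ whenever $\sigma \subseteq \sigma'$. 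Applying the Hilton--Milnor theorem (Theorem~\ref{HiltonMilnor}) to the second factor --- viewing its value at $\sigma$ as $\Sigma$ of a wedge indexed by $S_\sigma$ with the summand at $a_{J,i}$ equal to $(\Omega X)^{\wedge J}$ --- together with its naturality under pinch maps (Corollary~\ref{HiltonMilnorproj}), identifies the second factor with $\prod_{b \in B_{[m]}} V_b$, where $V_b \colon \catK\op \to \mathbf{Top}_*$ is the diagram with $V_b(\sigma) = W_b \coloneqq \Omega \Sigma\big(\bigwedge_{\tau \in \mathcal{F}} ((\Omega X)^{\wedge \tau})^{\wedge b(\tau)}\big)$ when $b \in B_\sigma$ and $V_b(\sigma) = \ast$ otherwise; the structure maps between the nontrivial values are identities, because $b(\tau) = 0$ for all $\tau \not\subseteq \sigma$. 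Since homotopy limits commute with products, I obtain
\[\Omega \ux^K_{\mathrm{co}} \simeq \prod_{i=1}^m \holim_{\catK\op}\big(\sigma \mapsto \Omega Y_i(\sigma)\big) \times \prod_{b \in B_{[m]}} \holim_{\catK\op} V_b,\]
and each factor of the first product equals $\Omega X_i$ exactly as in the proof of Theorem~\ref{thm:loopSplit} (Lemma~\ref{lem:diagramContractingArrows} with $I = \{i\}$, noting $A_i = \ast$).

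It then remains to compute $\holim_{\catK\op} V_b$ and match the indexing to the statement. Let $\tau_b = \bigcup\{\tau \in \mathcal{F} : b(\tau) \neq 0\}$ be the set of vertices occurring in $b$, and let $\mathcal{U}_b \subseteq \catK\op$ be the full subcategory on $\{\sigma \in K : \tau_b \subseteq \sigma\}$, so that $b \in B_\sigma$ if and only if $\sigma \in \mathcal{U}_b$. As in Lemma~\ref{lem:diagramContractingArrows}, $V_b$ is the homotopy right Kan extension of the constant diagram with value $W_b$ along $\mathcal{U}_b \hookrightarrow \catK\op$: the relevant comma categories $\{\rho \in K : \tau_b \subseteq \rho \subseteq \sigma\}$ are empty when $\sigma \notin \mathcal{U}_b$ and have an initial object (namely $\sigma$) when $\sigma \in \mathcal{U}_b$. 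Hence $\holim_{\catK\op} V_b \simeq \holim_{\mathcal{U}_b}(\text{const } W_b)$. If $\tau_b \notin K$ then $\mathcal{U}_b = \varnothing$ and this is contractible; if $\tau_b \in K$ then $\tau_b$ is a terminal object of $\mathcal{U}_b$, so $\mathcal{U}_b$ has contractible nerve and $\holim_{\mathcal{U}_b}(\text{const } W_b) \simeq W_b$. Finally, since every $b \in B_{[m]}$ involves only generators $a_{J,i}$ with $|J| \geq 2$ we have $|\tau_b| \geq 2$, so $\tau_b \in K$ exactly when $\tau_b$ is contained in some maximal face of $\mathcal{M}$, that is, exactly when $b \in \bigcup_{\sigma \in \mathcal{M}} B_\sigma$. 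Discarding the contractible factors indexed by the other brackets gives $\prod_{b \in B_{[m]}} \holim_{\catK\op} V_b \simeq \prod_{b \in \bigcup_{\sigma \in \mathcal{M}} B_\sigma} \Omega \Sigma\big(\bigwedge_{\tau \in \mathcal{F}} ((\Omega X)^{\wedge \tau})^{\wedge b(\tau)}\big)$, which together with the first product is the asserted equivalence.

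The main obstacle I expect is the middle step: promoting the objectwise Porter and Hilton--Milnor equivalences to equivalences of $\catK\op$-indexed diagrams so that the homotopy limit can be split off one factor at a time. This needs the naturality statements of Proposition~\ref{naturalporterproj} and Corollary~\ref{HiltonMilnorproj} to be applied coherently over all the pinch maps in the face poset, plus the bookkeeping that identifies the Hilton--Milnor indexing data (the integers $b(\tau)$ and the vertex sets $\tau_b$) with the combinatorial data $\mathcal{F}$ and $\mathcal{M}$ in the statement; everything after that is a homotopy-limit computation via contractible-nerve and Kan-extension arguments of the kind already used in Lemma~\ref{lem:diagramContractingArrows}.
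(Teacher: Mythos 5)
Your proposal is correct and follows essentially the same route as the paper: loop the defining homotopy limit, decompose each $\Omega D(\sigma)$ objectwise via Theorem~\ref{naturalporterdecomp} and Theorem~\ref{HiltonMilnor}, use Proposition~\ref{naturalporterproj} and Corollary~\ref{HiltonMilnorproj} to identify the structure maps as projections, and then read the homotopy limit off factor by factor. The only difference is that your last step --- isolating each bracket $b$ in a fixed global Hall basis as a subdiagram $V_b$ and computing $\holim_{\catK\op} V_b$ by a right Kan extension along $\mathcal{U}_b \hookrightarrow \catK\op$ --- makes precise what the paper states more informally as ``enumerating the distinct factors that appear for the maximal faces.''
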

\begin{proof}
    By definition of the polyhedral coproduct, $\displaystyle \ux^K_{\mathrm{co}} = \holim_{\catK\op} D$, where, if $\sigma = \{i_1,\ldots,i_k\}$, $D(\sigma) = \bigvee_{j=1}^k X_{i_j}$, and for each $\sigma' \subset \sigma$, the map $D(\sigma) \rightarrow D(\sigma')$ is given by the pinch map $p:\bigvee_{i \in \sigma} X_i \rightarrow \bigvee_{j \in \sigma'} X_j$. Since looping commutes with homotopy limits, we obtain a homotopy equivalence $\displaystyle \Omega\ux^K_{\mathrm{co}} \simeq \holim_{\catK\op} \Omega D$. By Theorem~\ref{naturalporterdecomp} and Remark~\ref{indexingofPorter}, there is a natural homotopy equivalence 
    \begin{equation}\label{Porteronuxcase}\Omega \left(\bigvee_{j=1}^k X_{i_j}\right) \simeq \prod\limits_{j=1}^k \Omega X_{i_j} \times \Omega \left(\bigvee\limits_{\tau \subseteq \sigma,|\tau| \geq 2} \left(\Sigma(\Omega X)^{\wedge \tau}\right)^{\vee|\tau|-1}\right).\end{equation} 
    Under this equivalence, it follows from Proposition~\ref{naturalporterproj} that for $\sigma' \subseteq \sigma$, the maps $\Omega D(\sigma) \rightarrow \Omega D(\sigma ')$ are given by $\pi \times \Omega p'$ up to homotopy, where $\pi$ is the projection and $p'$ is the pinch map. 
    
    Applying the Hilton--Milnor theorem to the second factor on the right hand side in (\ref{Porteronuxcase}), we obtain a natural homotopy equivalence \[\Omega \left(\bigvee\limits_{\tau \subseteq \sigma,|\tau| \geq 2} \left(\Sigma(\Omega X)^{\wedge \tau}\right)^{\vee|\tau|-1}\right) \simeq \prod\limits_{b \in B_\sigma} \Omega\Sigma\left(\bigwedge_{\tau \subseteq \sigma,|\tau| \geq 2}((\Omega X)^{\tau})^{\wedge b(\tau)}\right).\] By Theorem~\ref{HiltonMilnorproj}, for $\sigma' \subseteq \sigma$, there is a homotopy commutative diagram \[\begin{tikzcd}
	{\Omega \left(\bigvee\limits_{\tau \subseteq \sigma,|\tau| \geq 2} \left(\Sigma(\Omega X)^{\wedge \tau}\right)^{\vee|\tau|-1}\right)} & {\prod\limits_{b \in B_\sigma} \Omega\Sigma\left(\bigwedge_{\tau \subseteq \sigma,|\tau| \geq 2}((\Omega X)^{\tau})^{\wedge b(\tau)}\right)} \\
	{\Omega \left(\bigvee\limits_{\tau' \subseteq \sigma',|\tau'| \geq 2} \left(\Sigma(\Omega X)^{\wedge \tau'}\right)^{\vee|\tau'|-1}\right)} & {\prod\limits_{b \in B_{\sigma'}} \Omega\Sigma\left(\bigwedge_{\tau' \subseteq \sigma',|\tau'| \geq 2}((\Omega X)^{\tau'})^{\wedge b(\tau')}\right)}
	\arrow["{\simeq }", from=1-1, to=1-2]
	\arrow["{\Omega p'}", from=1-1, to=2-1]
	\arrow["\pi'", from=1-2, to=2-2]
	\arrow["\simeq", from=2-1, to=2-2]
\end{tikzcd}\] where $\pi'$ is the projection. Summarising, for $\sigma' \subseteq \sigma$, there is a homotopy commutative diagram \[\begin{tikzcd}
	{\Omega \left(\bigvee\limits_{i \in \sigma} X_i\right)} & {\prod\limits_{i \in \sigma} X_i \times \prod\limits_{b \in B_\sigma} \Omega\Sigma\left(\bigwedge_{\tau \subseteq \sigma,|\tau| \geq 2}((\Omega X)^{\tau})^{\wedge b(\tau)}\right)} \\
	{\Omega \left(\bigvee\limits_{j \in \sigma'} X_j\right)} & {\prod\limits_{j \in \sigma'} X_j \times \prod\limits_{b \in B_{\sigma'}} \Omega\Sigma\left(\bigwedge_{\tau' \subseteq \sigma',|\tau'| \geq 2}((\Omega X)^{\tau'})^{\wedge b(\tau')}\right).}
	\arrow["{\simeq }", from=1-1, to=1-2]
	\arrow["{\Omega p}", from=1-1, to=2-1]
	\arrow["{\pi''}", from=1-2, to=2-2]
	\arrow["{\simeq }", from=2-1, to=2-2]
\end{tikzcd}\]  Hence $\displaystyle \Omega \ux^K_{\mathrm{co}} \simeq \holim_{\catK\op} \Omega D$ is the product of each of the distinct factors that appear in the diagram. For $\sigma' \subseteq \sigma$, the product terms appearing in the decomposition for $\Omega D(\sigma)$ strictly contain the product terms in the decomposition for $\Omega D(\sigma')$. Therefore, enumerating the distinct factors that appear for the maximal faces, we obtain the desired equivalence.
\end{proof}

\begin{example}
    Let $K$ be a $1$-dimensional simplicial complex on $[m]$. In this case, the set $\mathcal{M}$ consists of all the $1$-simplices in $K$. For each $\sigma  = \{i,j\} \in \mathcal{M}$, $B_\sigma = \{\sigma\}$. Therefore, Theorem~\ref{thm:loopsplituxcase} implies there is a homotopy equivalence \[\Omega \ux^K_{co} \simeq \prod\limits_{i=1}^m \Omega X_i \times \prod\limits_{\sigma \in \mathcal{M}} \Omega \Sigma (\Omega X_i \wedge \Omega X_j).\]
\end{example}

\subsection{Loop space decompositions when the domain is contractible}

For a simplicial complex $K$, let $|K|$ be the geometric realisation of $K$ as a topological space. For polyhedral products of the form $\cxx^K$, by \cite[Theorem~2.21]{BBCG2}, there is a homotopy equivalence \begin{equation}\label{eqn:BBCGdecomp}\Sigma \cxx^K \simeq \bigvee\limits_{I \notin K} \Sigma(|K_I| \wedge X^{\wedge I}).\end{equation} In this subsection, we prove a dual statement for polyhedral coproducts of the form $\underline{f}^K_{\mathrm{co}}$ where the domain of each $f_i$ is contractible.  

\begin{theorem}
\label{thm:loopDomainContractible}
    Let $K$ be a simplicial complex on $[m]$ and $f_i\colon 
    X_i \to A_i$ where $X_i$ is contractible and $A_i$ is a pointed, simply connected CW-complex for $1 \leq i \leq m$. Then there is a homotopy equivalence
\begin{equation*}
    \Omega \underline{f}^{K} _{\mathrm{co}} \simeq \prod_{b\in B_{[m]}, I_b \not \in K}\Omega \Map_*(\Sigma |K_{I_b}|, \Sigma \Omega A_{1}^{\wedge l_1(b)} \wedge \cdots \wedge \Omega A_{m}^{\wedge l_m(b)}).
\end{equation*}
\end{theorem}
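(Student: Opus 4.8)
The plan is to derive Theorem~\ref{thm:loopDomainContractible} from the general decomposition of Theorem~\ref{thm:loopSplit}, specialising to the case where each $X_i$ is contractible. First I would apply Theorem~\ref{thm:loopSplit} directly to the $m$-tuple $\underline{f}$: since each $X_i \simeq *$, the factor $\prod_{i=1}^m \Omega X_i$ is contractible and drops out, leaving
\[
\Omega \underline{f}^{K}_{\mathrm{co}} \simeq \prod_{b \in B_{[m]}} \Omega \underline{\hat{f}}^{K_{I_b}}_{L_b,\mathrm{co}}.
\]
So the task reduces to identifying each weighted polyhedral smash coproduct $\underline{\hat{f}}^{K_{I_b}}_{L_b,\mathrm{co}}$ with the mapping space $\Map_*(\Sigma|K_{I_b}|, \Sigma \Omega A_1^{\wedge l_1(b)} \wedge \cdots \wedge \Omega A_m^{\wedge l_m(b)})$ when $I_b \notin K$, and with a contractible space otherwise (so that those terms can be discarded from the product).

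Next I would analyse the diagram $\Sigma \hat{D}^{L_b}$ on $\cat(K_{I_b})\op$ whose homotopy limit defines $\underline{\hat{f}}^{K_{I_b}}_{L_b,\mathrm{co}}$. Its value at a face $\sigma$ is $\Sigma \bigwedge_{i=1}^m (\Omega Y_i(\sigma))^{\wedge k_i(L_b)}$ where $Y_i(\sigma) = X_i$ if $i \in \sigma$ and $A_i$ otherwise. Because $X_i$ is contractible, so is $\Omega X_i$, and hence the entire smash $\Sigma \hat{D}^{L_b}(\sigma)$ is contractible as soon as $\sigma$ contains any vertex $i$ with $k_i(L_b) \geq 1$ — that is, any vertex of $I_b$ (recall $l_i(b) \neq 0$ exactly when $i \in I_b$, up to the definition of $I_b$). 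Thus on the subposet of faces of $K_{I_b}$ that meet $I_b$ nontrivially, the diagram is pointwise contractible. The only face of $K_{I_b}$ whose value is possibly non-contractible is the empty face $\varnothing$, where the value is $\Sigma \bigwedge_{i=1}^m (\Omega A_i)^{\wedge l_i(b)} = \Sigma\, \Omega A_1^{\wedge l_1(b)} \wedge \cdots \wedge \Omega A_m^{\wedge l_m(b)}$. This is precisely the situation of Lemma~\ref{lem:initialDiagramLemma}: a diagram on $\cat(K_{I_b})\op$ concentrated (up to homotopy) on the initial object $\varnothing$ with value $X := \Sigma\,\Omega A_1^{\wedge l_1(b)} \wedge \cdots \wedge \Omega A_m^{\wedge l_m(b)}$. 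Applying that lemma gives
\[
\underline{\hat{f}}^{K_{I_b}}_{L_b,\mathrm{co}} \simeq \Map_*(\Sigma|K_{I_b}|, \Sigma\, \Omega A_1^{\wedge l_1(b)} \wedge \cdots \wedge \Omega A_m^{\wedge l_m(b)}),
\]
after looping which yields the claimed factor. Finally, when $I_b \in K$, the full subcomplex $K_{I_b}$ is a simplex, so $|K_{I_b}|$ is contractible and $\Sigma|K_{I_b}|$ is contractible, making the mapping space contractible; hence these $b$ contribute trivial factors and can be omitted, which explains the index set $\{b \in B_{[m]} : I_b \notin K\}$ in the statement.

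The main obstacle is the careful handling of Lemma~\ref{lem:initialDiagramLemma}: that lemma is stated for a diagram that is literally a point away from $\varnothing$, whereas here the off-initial values are only contractible, not strictly trivial, and the maps are not literally identities or constants. I would address this by replacing $\Sigma\hat{D}^{L_b}$ with a levelwise-equivalent diagram that is strictly of the form in Lemma~\ref{lem:initialDiagramLemma} — using that a map out of a contractible space into a contractible space, and the structure maps between contractible values, can be rectified — and invoking the invariance of homotopy limits under objectwise weak equivalences of diagrams. Alternatively one can argue with Lemma~\ref{lem:diagramContractingArrows} to first reduce from $K$ to $K_{I_b}$ and then observe that the remaining diagram deformation-retracts onto its value at $\varnothing$ in the appropriate homotopical sense. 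A secondary point requiring care is bookkeeping: matching $k_i(L_b) = l_i(b)$ and confirming that the simple-connectivity hypothesis on the $A_i$ (inherited so that $\Omega A_i$ is connected) is exactly what is needed to have applied Theorem~\ref{HiltonMilnor} inside Theorem~\ref{thm:loopSplit} in the first place.
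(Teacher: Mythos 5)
Your proposal is correct and follows essentially the same route as the paper: apply Theorem~\ref{thm:loopSplit}, observe that $\prod_i\Omega X_i$ drops out, identify each weighted polyhedral smash coproduct with the mapping space via Lemma~\ref{lem:initialDiagramLemma} (the paper packages this step as Lemma~\ref{lem:contractibleFlatProduct}), and discard the factors with $I_b\in K$ because $|K_{I_b}|$ is then contractible. Your extra remark about rectifying the objectwise-contractible-but-not-strictly-trivial diagram before invoking Lemma~\ref{lem:initialDiagramLemma} is a point the paper passes over silently, and is a reasonable way to justify it.
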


To prove Theorem~\ref{thm:loopDomainContractible}, we will use the following consequence of Theorem~\ref{thm:loopSplit}.

\begin{lemma}
\label{lem:contractibleFlatProduct}
Assume that $X_i$ is contractible and $A_i$ is a pointed, simply connected CW-complex for all $i$ and $N\in \mathbb{N}^m$. There is a homotopy equivalence 
\begin{equation*}
    \underline{\hat{f}}^{K} _{N,\mathrm{co}} \simeq \Map_*(\Sigma |K|, \Sigma \Omega A_1^{\wedge k_1(N)} \wedge \cdots \wedge \Omega A_m^{\wedge k_m(N)}).
\end{equation*}
\end{lemma}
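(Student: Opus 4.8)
The plan is to unwind the definition of $\underline{\hat f}^K_{N,\mathrm{co}}$ and then identify the diagram $\Sigma\hat D^N$, up to objectwise equivalence, with the one appearing in the dual Wedge Lemma~\ref{lem:initialDiagramLemma}. Write $W = \Omega A_1^{\wedge k_1(N)}\wedge\cdots\wedge\Omega A_m^{\wedge k_m(N)}$, so that $\underline{\hat f}^K_{N,\mathrm{co}} = \holim_{\catK\op}\Sigma\hat D^N$ with $\hat D^N(\sigma) = \bigwedge_{i=1}^m(\Omega Y_i(\sigma))^{\wedge k_i(N)}$. Since each $X_i$ is contractible, $\Omega X_i$ is contractible, and because each $k_i(N)\geq 1$ the smash $\hat D^N(\sigma)$ contains a contractible (well-pointed) factor $(\Omega X_i)^{\wedge k_i(N)}$ whenever $i\in\sigma$; hence $\Sigma\hat D^N(\sigma)\simeq *$ for $\sigma\neq\varnothing$, while $\Sigma\hat D^N(\varnothing)=\Sigma W$. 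Thus $\Sigma\hat D^N$ is objectwise weakly equivalent to the diagram $\calD$ of Lemma~\ref{lem:initialDiagramLemma} with $X=\Sigma W$, and I would like to conclude $\holim_{\catK\op}\Sigma\hat D^N\simeq\Map_*(\Sigma|K|,\Sigma W)$.

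The point requiring care is that this objectwise equivalence is not realised by a strict natural transformation: the structure map $\Sigma\hat D^N(\sigma)\to\Sigma\hat D^N(\varnothing)$ is only null\nobreakdash-homotopic, not literally constant, so one cannot compare directly with the strictly constant diagram $\calD$. To get around this I would first reduce to the case $X_i=*$. For each $i$ the basepoint inclusion $\iota_i\colon *\hookrightarrow X_i$ is a pointed homotopy equivalence, and since $f_i$ is a pointed map the square with top $\iota_i$, bottom $\mathrm{id}_{A_i}$, left the basepoint inclusion $*\hookrightarrow A_i$, and right $f_i$ commutes on the nose. By the evident smash-coproduct analogue of Theorem~\ref{thm:mapinducedbymaps}, this data induces a \emph{strict} natural transformation from the diagram $\Sigma\widehat{D_0}^N$ defining $\underline{\hat f^0}^K_{N,\mathrm{co}}$ (with $f^0_i\colon *\hookrightarrow A_i$) to the diagram $\Sigma\hat D^N$, whose component at each $\sigma$ is $\Sigma$ of a smash of pointed homotopy equivalences, hence a homotopy equivalence. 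Since homotopy limits are invariant under objectwise homotopy equivalences of diagrams, $\underline{\hat f}^K_{N,\mathrm{co}}\simeq\underline{\hat f^0}^K_{N,\mathrm{co}}$.

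It then remains to compute $\underline{\hat f^0}^K_{N,\mathrm{co}}$. With $X_i=*$ we have $\Omega X_i=*$ \emph{literally}, so for $\sigma\neq\varnothing$ the smash $\widehat{D_0}^N(\sigma)$ contains a one-point factor and equals $*$, while $\widehat{D_0}^N(\varnothing)=W$; moreover every structure map out of a nonempty face is the unique (constant) map. Hence $\Sigma\widehat{D_0}^N$ is exactly the diagram $\calD$ of Lemma~\ref{lem:initialDiagramLemma} with $X=\Sigma W$, and that lemma gives $\underline{\hat f^0}^K_{N,\mathrm{co}}=\holim_{\catK\op}\Sigma\widehat{D_0}^N\simeq\Map_*(\Sigma|K|,\Sigma W)$. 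Combining the two steps yields $\underline{\hat f}^K_{N,\mathrm{co}}\simeq\Map_*(\Sigma|K|,\Sigma\,\Omega A_1^{\wedge k_1(N)}\wedge\cdots\wedge\Omega A_m^{\wedge k_m(N)})$, as claimed.

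The main obstacle is purely the homotopy-coherence bookkeeping in the reduction step: verifying strict commutativity of the comparison squares and that the induced transformation is an objectwise equivalence (pushing the equivalences through the smash powers and the suspension, using well-pointedness); everything after that is an immediate invocation of Lemma~\ref{lem:initialDiagramLemma}. One could instead try to argue directly with $\Sigma\hat D^N$ by quoting that homotopy limits depend only on the objectwise homotopy type together with the structure maps up to coherent homotopy, but reducing to $X_i=*$ keeps the argument strictly functorial and elementary.
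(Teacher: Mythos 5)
Your proof is correct and follows the same route as the paper: observe that $\Sigma\hat D^N$ is objectwise contractible away from $\varnothing$ and takes the value $\Sigma\,\Omega A_1^{\wedge k_1(N)}\wedge\cdots\wedge\Omega A_m^{\wedge k_m(N)}$ at $\varnothing$, then apply Lemma~\ref{lem:initialDiagramLemma}. The paper's proof consists of exactly these two observations; your intermediate reduction to $X_i=*$ via a strict natural transformation that is an objectwise homotopy equivalence merely makes explicit the replacement of the diagram by the strictly constant one, a step the paper treats as immediate.
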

\begin{proof}
    By definition, $\hat{D}^N(\varnothing)= \Sigma \Omega A_1^{\wedge k_1(N)} \wedge \cdots \wedge \Omega A_m^{\wedge k_m(N)}$ and $\hat{D}^N(\sigma) \simeq *$ for all $\sigma \neq \varnothing$ since all the $X_i$ are contractible.  Thus, we may apply Lemma~\ref{lem:initialDiagramLemma} to the diagram defining $  \underline{\hat{f}}^{K} _{N,\mathrm{co}}$ which yields the claimed result.
\end{proof} 
With the lemma above, it is straightforward to prove Theorem \ref{thm:loopDomainContractible}
\begin{proof}[Proof of Theorem \ref{thm:loopDomainContractible}]
    By Lemma~\ref{lem:contractibleFlatProduct}, if $I_b \in K$, then $\underline{\hat{f}}^{K_{I_b}} _{L_b,\mathrm{co}}$ is contractible. One can then apply Lemma~\ref{lem:contractibleFlatProduct} to the decomposition in Theorem \ref{thm:loopSplit} to prove the statement. 
\end{proof}

\begin{example}
    Let $K = \partial \Delta^{m-1}$. In this case, the only missing face of $K$ is $\{1,\ldots,m\}$. By Theorem~\ref{thm:loopDomainContractible}, there is a homotopy equivalence \begin{equation*}
    \Omega \underline{f}^{K} _{\mathrm{co}} \simeq \prod_{b\in B_{[m]}, I_b = \{1,\ldots,m\}}\Omega \Map_*(\Sigma |K_{I_b}|, \Sigma \Omega A_{1}^{\wedge l_1(b)} \wedge \cdots \wedge \Omega A_{m}^{\wedge l_m(b)}),
\end{equation*} where the indexing set of the product consists of brackets $b$ such that for each $i \in [m]$, there is a face $\sigma \in K$ in $b$ which contains $i$.
\end{example}

In the case of polyhedral products, it is known that the decomposition in~\eqref{eqn:BBCGdecomp} desuspends in certain cases. For example, when $K$ is a shifted complex \cite{GT1,IK1}, a flag complex with chordal 1-skeleton \cite[Theorem~6.4]{PT}, or more generally, a totally fillable simplicial complex \cite[Corollary~7.3]{IK2}. Specialising, polyhedral products of the form $(D^2,S^1)^K$ are known as \textit{moment-angle complexes}, which are denoted $\mathcal{Z}_K$. In the aforementioned cases, $\mathcal{Z}_K$ is homotopy equivalent to a wedge of spheres.  

Consider the case where $K$ is a simplicial complex on $[m]$, and is either a shifted complex, or a flag complex with chordal $1$-skeleton. The dual of the polyhedral product $\cxx^K$ is the polyhedral coproduct $\pxx^K_{\mathrm{co}}$. In the first case, $|K_I|$ is homotopy equivalent to a wedge of spheres for all $I \subseteq [m]$, and in the second case, $|K_I|$ is homotopy equivalent to a set of disjoint points for all $I \subseteq [m]$. Therefore, in the case where each $X_i$ is a simply connected sphere, Theorem~\ref{thm:loopDomainContractible} implies that $\Omega \pxx^K_{\mathrm{co}}$ is homotopy equivalent to a product of iterated loop spaces of spheres. Dual to the polyhedral product case, we give the following conjecture.

\begin{conjecture}
\label{conj:delooping}
    Let $K$ be a shifted complex or a flag complex with chordal $1$-skeleton. Then the decomposition in Theorem~\ref{thm:loopDomainContractible} deloops.
\end{conjecture}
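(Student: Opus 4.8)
The plan is to dualise the inductive proofs that the Bahri--Bendersky--Cohen--Gitler splitting of $\cxx^K$ desuspends when $K$ is shifted (Grbić--Theriault~\cite{GT1}, Iriye--Kishimoto~\cite{IK1}) or flag with chordal $1$-skeleton (Panov--Theriault~\cite[Theorem~6.4]{PT}; cf.\ the ``totally fillable'' condition of~\cite[Corollary~7.3]{IK2}). We retain the hypotheses of Theorem~\ref{thm:loopDomainContractible}, so each $X_i$ is contractible. By Lemma~\ref{lem:contractibleFlatProduct}, the conjectured delooping is equivalent to an equivalence $\underline{f}^{K}_{\mathrm{co}} \simeq \prod_{b\in B_{[m]},\, I_b\notin K} \underline{\hat{f}}^{K_{I_b}}_{L_b,\mathrm{co}}$; by Theorem~\ref{thm:loopDomainContractible} the two sides already agree after applying $\Omega$, so the content is that this equivalence deloops.

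The first ingredient is a gluing lemma for polyhedral coproducts: if $K = K_1\cup K_2$ with $K_0 := K_1\cap K_2$, then decomposing the indexing poset as $\cat(K)\op = \cat(K_1)\op\cup_{\cat(K_0)\op}\cat(K_2)\op$ (a homotopy pushout of categories, since the subcomplex inclusions induce inclusions of down-closed subposets) and using that homotopy limits carry homotopy pushouts of indexing categories to homotopy pullbacks, one obtains
\[
\underline{f}^{K}_{\mathrm{co}}\ \simeq\ \underline{f}^{K_1}_{\mathrm{co}}\ \times^{h}_{\ \underline{f}^{K_0}_{\mathrm{co}}}\ \underline{f}^{K_2}_{\mathrm{co}},
\]
the Eckmann--Hilton dual of the Mayer--Vietoris homotopy pushout $\uxa^K\simeq \uxa^{K_1}\cup^{h}_{\uxa^{K_0}}\uxa^{K_2}$ for polyhedral products. (To keep all four corners over the ambient vertex set $[m]$ one works with the diagrams $D^K_L$ of the proof of Theorem~\ref{thm:incinducedmap} and translates via the pinch maps there; this is routine.) I would then induct on the number of vertices using $K = \mathrm{star}_K(v)\cup_{\mathrm{link}_K(v)}\mathrm{del}_K(v)$: take $v=m$ when $K$ is shifted, so that $\mathrm{link}_K(m)$ and $\mathrm{del}_K(m)$ are again shifted and $\mathrm{star}_K(m)$ is a cone; and take $v$ to be a simplicial vertex from a perfect elimination ordering when $K$ is flag with chordal $1$-skeleton, so that $\mathrm{link}_K(v)=\Delta_\tau$ and $\mathrm{star}_K(v)=\Delta_{\{v\}\cup\tau}$ are full simplices on the clique $\tau=N_K(v)$ and $\mathrm{del}_K(v)$ is again flag with chordal $1$-skeleton. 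In the flag-chordal case the simplex corners are immediate from Example~\ref{ex:pointcase}(2): the diagram defining $\underline{f}^{\Delta_J}_{\mathrm{co}}$ has an initial object, so contractibility of the $X_i$ gives $\underline{f}^{\Delta_J}_{\mathrm{co}}\simeq\bigvee_{i\notin J}A_i$, and the map $\underline{f}^{\Delta_{\{v\}\cup\tau}}_{\mathrm{co}}\to\underline{f}^{\Delta_\tau}_{\mathrm{co}}$ is, up to homotopy, the inclusion of the wedge summand $\bigvee_{i\notin\{v\}\cup\tau}A_i\hookrightarrow A_v\vee\bigvee_{i\notin\{v\}\cup\tau}A_i$; moreover $\Delta_\tau$ is a \emph{full} subcomplex of $\mathrm{del}_K(v)$, so the retraction theorem of Section~\ref{sec:basics} provides a section of $\underline{f}^{\mathrm{del}_K(v)}_{\mathrm{co}}\to\underline{f}^{\Delta_\tau}_{\mathrm{co}}$. (The shifted case additionally needs a lemma identifying polyhedral coproducts over a cone, with $\mathrm{link}_K(m)$ again full in $\mathrm{del}_K(m)$.)

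The crux is to show that, under these hypotheses, the homotopy pullback square splits as a product compatibly with the indexing of Theorem~\ref{thm:loopDomainContractible}. This is not formal: a homotopy pullback $B\times^{h}_{A_v\vee B} C$ along the inclusion of a wedge summand $B\hookrightarrow A_v\vee B$ with $C\to A_v\vee B$ admitting a section does yield a section of the projection $B\times^{h}_{A_v\vee B}C\to B$, but a fibration with a section need not be trivial --- precisely dual to the fact that a homotopy pushout $X\cup_A Y$ with $A\to Y$ a retract need not split off a wedge summand, which is why the desuspension of $\cxx^K$ (equivalently, that $\mathcal{Z}_K$ is a co-$H$-space) for shifted or flag-chordal $K$ genuinely uses the combinatorics of $K$ and does not follow from the BBCG splitting alone. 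I expect this splitting to be the main obstacle: one must dualise that structural input, i.e.\ show that the clutching map of the resulting fibration into $\prod_b \underline{\hat{f}}^{K_{I_b}}_{L_b,\mathrm{co}}$ is null-homotopic --- the Eckmann--Hilton dual of the triviality of the attaching maps in the cell structure of $\cxx^K$ for these $K$. Granting this, identifying the two indexing sets is bookkeeping: both sides loop to the decomposition of Theorem~\ref{thm:loopDomainContractible}, and the effect of deleting $v$ on the Hall-basis indexing is governed by Proposition~\ref{naturalporterproj} and Corollary~\ref{HiltonMilnorproj}, exactly as in the proof of Theorem~\ref{thm:loopsplituxcase}. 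A cleaner packaging of the same content would be to dualise the notion of a totally fillable complex: define ``totally co-fillable'' complexes, show that such complexes have delooping decompositions, and verify that shifted complexes and flag complexes with chordal $1$-skeleton are totally co-fillable.
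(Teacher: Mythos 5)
The statement you are addressing is stated in the paper as a \emph{conjecture}; the authors give no proof of it, so there is no argument of theirs to compare yours against. Your proposal should therefore be judged on whether it closes the conjecture on its own, and it does not: by your own account the decisive step is left open. The reduction via Lemma~\ref{lem:contractibleFlatProduct}, the Mayer--Vietoris homotopy pullback (which is exactly Proposition~\ref{prop:pullbackSquare} of the paper), the induction scheme over $\mathrm{star}_K(v)\cup_{\mathrm{link}_K(v)}\mathrm{del}_K(v)$ with the appropriate choice of $v$ in the shifted and flag-chordal cases, the identification of the simplex corners as wedges of the $A_i$, and the section coming from the retraction theorem for full subcomplexes are all sound and are a reasonable dualisation of the Grbi\'c--Theriault and Panov--Theriault strategies. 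But everything then hinges on showing that the resulting fibration over $\underline{f}^{\mathrm{del}_K(v)}_{\mathrm{co}}$ (or the homotopy pullback along the wedge-summand inclusion) is \emph{trivial}, not merely that it admits a section, and you explicitly flag this as an unresolved obstacle rather than proving it. That triviality is precisely the content of the conjecture --- it is the Eckmann--Hilton dual of showing that the attaching maps of $\cxx^K$ are null, which in the polyhedral product literature requires genuine combinatorial input (fillability, fat-wedge filtrations, etc.) beyond the splitting after (de)looping. Writing ``granting this'' at that point means the proof has a gap exactly where the difficulty lives.

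Two smaller cautions if you pursue this. First, in the flag-chordal case your identification of $\underline{f}^{\overline{\Delta_{\{v\}\cup\tau}}}_{\mathrm{co}}\to\underline{f}^{\overline{\Delta_\tau}}_{\mathrm{co}}$ as a wedge-summand inclusion uses that the corners are taken over the ambient vertex set $[m]$ as in Proposition~\ref{prop:pullbackSquare}; this is fine, but the bookkeeping matching the Hall-basis indexing of Theorem~\ref{thm:loopDomainContractible} across the induction is more delicate than ``routine'', since deleting a vertex changes both $B_{[m]}$ and the sets $I_b$, and one must check that the product over $\{b: I_b\notin K\}$ is respected by the pullback decomposition before delooping, not only after. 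Second, your proposed notion of ``totally co-fillable'' complexes is an attractive way to package the missing input, but as stated it is a definition in search of a theorem: the claim that such complexes deloop is again the conjecture. In short, the proposal is a plausible and well-motivated attack, consistent with the paper's framework, but it is a research programme, not a proof.
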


\section{Polyhedral coproducts under operations on simplicial complexes}

\subsection{Joins of simplicial complexes}
For any polyhedral product, if $K = K_1 \star K_2$ is the join of $K_1$ and $K_2$, then $\uxa^K \cong \uxa^{K_1} \times \uxa^{K_2}$. Therefore, we may expect a homotopy equivalence $\uxa^{K}_{\mathrm{co}} \simeq \uxa^{K_1}_{\mathrm{co}} \vee \uxa^{K_2}_{\mathrm{co}}$. However, this does not hold in general for polyhedral coproducts. 

For $1 \leq i \leq 4$, let $X_i = \mathbb{C}P^\infty$, and let $K = \{1,2\} \star \{3,4\}$ be the boundary of a square. Since $(\mathbb{C}P^\infty,*)^{\{1,2\}}_{\mathrm{co}}$ and $(\mathbb{C}P^\infty,*)^{\{3,4\}}_{\mathrm{co}}$ are homotopy equivalent to $\mathbb{C}P^\infty \times \mathbb{C}P^\infty$ by Example~\ref{ex:pointcase}, suppose that $(\mathbb{C}P^\infty,*)_{\mathrm{co}}^K \simeq (\mathbb{C}P^\infty \times \mathbb{C}P^\infty) \vee (\mathbb{C}P^\infty \times \mathbb{C}P^\infty)$. Since $\mathbb{C}P^\infty$ is simply connected and $\Omega \mathbb{C}P^\infty \simeq S^1$, by Theorem~\ref{thm:loopsplituxcase}, there is a homotopy equivalence \[\Omega (\mathbb{C}P^\infty,*)^K_{\mathrm{co}} \simeq \prod\limits_{i=1}^4 (S^1 \times \Omega S^3).\]
	
 Now by Theorem~\ref{naturalporterdecomp} applied to $(\mathbb{C}P^\infty \times \mathbb{C}P^\infty) \vee (\mathbb{C}P^\infty \times \mathbb{C}P^\infty)$, there is a homotopy equivalence \[\Omega((\mathbb{C}P^\infty \times \mathbb{C}P^\infty) \vee (\mathbb{C}P^\infty \times \mathbb{C}P^\infty)) \simeq \prod\limits_{i=1}^4 S^1 \times \Omega \Sigma \left((S^1 \times S^1) \wedge (S^1 \times S^1)\right).\] For spaces $X$ and $Y$, there is a well-known homotopy equivalence $\Sigma(X \times Y) \simeq \Sigma X \vee \Sigma Y \vee \Sigma(X \wedge Y)$. By shifting the suspension coordinate, we obtain homotopy equivalences \[\prod\limits_{i=1}^4 S^1 \times \Omega \Sigma\left((S^1 \vee S^1 \vee S^2) \wedge (S^1 \vee S^1 \vee S^2)\right) \simeq \prod\limits_{i=1}^4 S^1 \times \Omega \Sigma\left(\bigvee\limits_{i=1}^4 S^2 \vee \bigvee\limits_{i=1}^4 S^3 \vee S^4\right).\] By Theorem~\ref{HiltonMilnor}, $\Omega\Sigma \left(\bigvee_{i=1}^4 S^2 \vee \bigvee_{i=1}^4 S^3 \vee S^4\right)$ decomposes as an infinite, finite type product of spheres and loops on spheres. However, since $\Omega (\mathbb{C}P^\infty,*)^K_{\mathrm{co}}$ is homotopy equivalent to a finite product of spheres and loops on spheres, \[\Omega(\mathbb{C}P^\infty,*)^K_{\mathrm{co}} \not\simeq \Omega((\mathbb{C}P^\infty \times \mathbb{C}P^\infty) \vee (\mathbb{C}P^\infty \times \mathbb{C}P^\infty)),\] which implies that \[(\mathbb{C}P^\infty,*)^K_{\mathrm{co}} \not\simeq (\mathbb{C}P^\infty \times \mathbb{C}P^\infty) \vee (\mathbb{C}P^\infty \times \mathbb{C}P^\infty).\]
	
However, it is possible to say something about certain joins. Let $K$ be a simplicial complex on $[m]$ and for $1 \leq i \leq m$, let $(X_i,A_i)$ be CW-pairs. If $(X_{m+1},A_{m+1})$ is a CW-pair where $X_{m+1}$ is contractible, then there are homotopy equivalences $\uxa^{K * \{m+1\}} \cong \uxa^K \times X_{m+1} \simeq \uxa^K$. The following dualises this case.
\begin{proposition}
    \label{prop:joinVertex}
	Let $K$ be a simplicial complex on the vertex set $[m]$ and let $\underline{f}^K_{\mathrm{co}}$ be any polyhedral coproduct. Let $K' = K \star \{m+1\}$ where $f_{m+1} \colon * \to Y$ for some space $Y$. Then $\underline{f}^{K'}_{\mathrm{co}} \simeq \underline{f}^{K}_{\mathrm{co}}$. 
\end{proposition}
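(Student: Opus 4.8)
The plan is to compute the homotopy limit defining $\underline{f}^{K'}_{\mathrm{co}}$ directly by exploiting the structure of the indexing category. Since $K' = K \star \{m+1\}$, every face of $K'$ is uniquely either a face $\sigma$ of $K$ or a face $\sigma \cup \{m+1\}$ with $\sigma \in K$, and face inclusions respect this splitting; thus there is an isomorphism of categories $\cat(K') \cong \cat(K) \times [1]$, where $[1] = \cat(\{m+1\})$ is the poset $\varnothing \subset \{m+1\}$. Passing to opposites, $\cat(K')\op \cong \cat(K)\op \times [1]\op$, and in $[1]\op$ the object $\{m+1\}$ is \emph{initial}.

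Next I would restrict the homotopy limit along the inclusion $\iota \colon \cat(K)\op \hookrightarrow \cat(K')\op$ of the full subcategory on the faces that contain $m+1$; under the identification above this is the inclusion $\cat(K)\op \cong \cat(K)\op \times \{\{m+1\}\} \hookrightarrow \cat(K)\op \times [1]\op$. Because $\{m+1\}$ is initial in $[1]\op$, the inclusion $\{\{m+1\}\} \hookrightarrow [1]\op$ is homotopy initial — each comma category over an object of $[1]\op$ is the terminal category — and hence so is $\iota$. Concretely, the point to check is that for a face $c$ of $K'$ the comma category $\iota \downarrow c$ is the poset $\{\alpha \in K \mid \alpha \supseteq c \setminus \{m+1\}\}$ ordered by reverse inclusion, which has $c \setminus \{m+1\}$ as a terminal object and is therefore contractible. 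Restriction along a homotopy initial functor preserves homotopy limits, so writing $D'$ for the diagram of Definition~\ref{def:main} associated to the $(m+1)$-tuple over $K'$,
\[
\underline{f}^{K'}_{\mathrm{co}} = \holim_{\cat(K')\op} D' \;\simeq\; \holim_{\cat(K)\op} D' \circ \iota .
\]
Equivalently, one may compute the homotopy limit over the product $\cat(K)\op \times [1]\op$ iteratively, using that the homotopy limit of a diagram indexed by a category with an initial object is evaluation at that object.

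It then remains to identify $D' \circ \iota$ with the diagram $D$ defining $\underline{f}^K_{\mathrm{co}}$. For $\sigma \in K$ we have $(D' \circ \iota)(\sigma) = \bigvee_{i=1}^{m+1} Y_i(\sigma \cup \{m+1\})$, and since $X_{m+1} = *$ the $(m+1)$st wedge summand is a point, so this is canonically $\bigvee_{i=1}^{m} Y_i(\sigma) = D(\sigma)$; along a face inclusion $\sigma \subseteq \tau$ in $K$ the structure map of $D' \circ \iota$ is the one induced by $\sigma \cup \{m+1\} \subseteq \tau \cup \{m+1\}$, which acts as the identity on the $(m+1)$st coordinate (as $m+1 \notin \tau \setminus \sigma$) and exactly as in $D$ on the remaining coordinates, so $D' \circ \iota = D$ as diagrams. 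Combining this with the previous step gives $\underline{f}^{K'}_{\mathrm{co}} \simeq \underline{f}^K_{\mathrm{co}}$. The only points requiring care are the direction conventions when passing to opposite categories and verifying that the identification $D' \circ \iota \cong D$ is natural rather than merely objectwise; the genuine content is simply the recognition that the cone vertex $m+1$ contributes a $[1]\op$-factor with an initial object, which drops out of the homotopy limit — the Eckmann--Hilton mirror of the fact that in the product $(\underline{X},\underline{A})^{K \star \{m+1\}} \cong (\underline{X},\underline{A})^K \times X_{m+1}$ the extra factor $X_{m+1}$ is contractible.
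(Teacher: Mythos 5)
Your argument is correct and reaches the same reduction as the paper by a formally different device. The paper forms the projection $p\colon \cat(K')\op \to \catK\op$ that deletes the vertex $m+1$, observes that the right Kan extension of $D'$ along $p$ recovers $D$ (this is where $X_{m+1}=*$ is used), and invokes the preservation of homotopy limits by right Kan extensions. You instead restrict along the section $\iota\colon \catK\op \hookrightarrow \cat(K')\op$ picking out the faces that contain $m+1$, check that $\iota$ is homotopy initial because each comma category $\iota\downarrow c$ is a poset with a terminal object (namely $c\cup\{m+1\}$, i.e.\ $c\setminus\{m+1\}$ under your identification with faces of $K$), and identify $D'\circ\iota$ with $D$ --- again using $X_{m+1}=*$ to collapse the extra wedge summand, which is exactly where the hypothesis on $f_{m+1}$ enters in both proofs. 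The two routes are two sides of one adjunction: $\iota$ is left adjoint to $p$, so $\mathrm{Ran}_p \cong \iota^*$ and the two constructions produce literally the same diagram on $\catK\op$; the left-adjoint property is also why your comma categories acquire terminal objects. Your version has the mild advantage that the cofinality hypothesis is verified by an explicit, elementary contractibility check on finite posets, whereas the paper's phrasing is shorter but leans on the (pointwise, homotopy) right Kan extension machinery; your closing naturality check of $D'\circ\iota = D$ is also spelled out more carefully than in the paper. Both arguments are complete.
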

\begin{proof}
    There is an equivalence of categories $\cat(K')\op \cong \catK\op \times \cat(\{m+1\})\op$ and a projection map $p \colon \cat(K')\op \to \catK\op$. Geometrically, the map $p$ deletes vertex $m+1$ from any simplex in $K'$. Let $D'\colon \cat(K')\op \to \mathbf{Top}_*$ (resp. $D\colon \cat(K)\op \to \mathbf{Top}_*$) be the diagram defining $\underline{f}^{K'}_{\mathrm{co}}$ (resp. $\underline{f}^{K}_{\mathrm{co}}$) . By right Kan extending $D'$ along $p$, we recover $D$. Note that had we not placed the assumption on the domain of $f_{m+1}$ then the Kan extension would not recover $D$. Since right Kan extensions preserve homotopy limits, we get the equivalence $\underline{f}^{K'}_{\mathrm{co}} \simeq \underline{f}^{K}_{\mathrm{co}}.$
\end{proof}

\subsection{Pullbacks of polyhedral coproducts}

Let $K_1$ be a simplicial complex on $\{1,\ldots,n\}$ and $K_2$ be a simplicial complex on $\{l,\ldots,m\}$ with $n <m$ and $l \leq  m$, and let $L$ be a subcomplex (possibly empty) of $K_1$ and $K_2$ on $\{l,\ldots,n\}$. Define $K = K_1 \cup_L K_2$, and for $M$ one of $K_1$, $K_2$ or $L$, let $\overline{M}$ be the simplicial complex considered on the vertex set $\{1,\ldots,m\}$. For polyhedral products, by \cite[Proposition 3.1]{GT1}, there is a pushout 
\[\begin{tikzcd}
	{\uxa^{\overline{L}}} & {\uxa^{\overline{K_1}}} \\
	{\uxa^{\overline{K_2}}} & {\uxa^{K}.}
	\arrow[from=1-1, to=2-1]
	\arrow[from=1-1, to=1-2]
	\arrow[from=2-1, to=2-2]
	\arrow[from=1-2, to=2-2]
\end{tikzcd}\]
For polyhedral coproducts, we can prove a dual statement. 

\begin{proposition}
    \label{prop:pullbackSquare}
    Let $K_1$ be a simplicial complex on $\{1,\ldots,n\}$ and $K_2$ be a simplicial complex on $\{l,\ldots,m\}$ with $n <m$ and $l \leq  m$, and let $L$ be a subcomplex (possibly empty) of $K_1$ and $K_2$ on $\{l,\ldots,n\}$. Define $K = K_1 \cup_L K_2$. Then there is a homotopy pullback of polyhedral coproducts 
    \[\begin{tikzcd}
	{\underline{f}_{\mathrm{co}}^{K}} & {\underline{f}_{\mathrm{co}}^{\overline{K_2}}} \\
	{\underline{f}_{\mathrm{co}}^{\overline{K_1}}} & {\underline{f}_{\mathrm{co}}^{\overline{L}}}
	\arrow[from=2-1, to=2-2]
	\arrow[from=1-1, to=2-1]
	\arrow[from=1-1, to=1-2]
	\arrow[from=1-2, to=2-2]
\end{tikzcd}\] 
where the maps $\underline{f}_{\mathrm{co}}^{\overline{K_1}} \to \underline{f}_{\mathrm{co}}^{\overline{L}}$, and $\underline{f}_{\mathrm{co}}^{\overline{K_2}} \to \underline{f}_{\mathrm{co}}^{\overline{L}}$ are induced by the simplicial inclusions.
\end{proposition}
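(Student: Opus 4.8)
The plan is to realise all four polyhedral coproducts in the square as Bousfield--Kan homotopy limits of a single diagram restricted to nested full subcategories of $\catK\op$, and then deduce the homotopy pullback from the combinatorics of how $\catK\op$ is built from $\cat(K_1)\op$ and $\cat(K_2)\op$.

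First I would record the category-level picture. Since each $K_i$ (and $L$) is a simplicial complex, it is closed under passage to subfaces, and a morphism out of a face $\sigma$ in $\catK\op$ is a face inclusion $\tau\subseteq\sigma$; hence $\cat(\overline{K_1})\op = \cat(K_1)\op$ and $\cat(\overline{K_2})\op = \cat(K_2)\op$ are full subcategories of $\catK\op$ that are \emph{cosieves} (closed under the targets of outgoing morphisms). Because $K = K_1 \cup_L K_2$ they cover $\catK\op$, and because $K_1 \cap K_2 = L$ their intersection is $\cat(\overline{L})\op = \cat(L)\op$, which is in turn a cosieve in each. Restricting the defining diagram $D_K\colon \catK\op\to\mathbf{Top}_*$, $D_K(\sigma)=\bigvee_{i=1}^m Y_i(\sigma)$, to $\cat(K_1)\op$ returns exactly the diagram $D_{\overline{K_1}}$ defining $\underline{f}_{\mathrm{co}}^{\overline{K_1}}$ --- both use the same $m$-tuple $\underline{f}$ on the vertex set $[m]$, and a vertex $>n$ never lies in a face of $K_1$ --- and similarly over $K_2$ and over $L$. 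By Theorem~\ref{thm:incinducedmap} and Remark~\ref{rem:InclusionDiagram}, applied with all complexes regarded on the vertex set $[m]$ so that the pinch maps occurring there are identities, the four maps in the square are precisely the restriction maps $\holim_{\catK\op}D_K \to \holim_{\cat(M)\op}D_{\overline{M}}$ for $M\in\{K_1,K_2,L\}$, and the square commutes since all four are restrictions of $\holim_{\catK\op}D_K$.

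Next I would use the Bousfield--Kan cosimplicial replacement. Since $\cat(K_1)\op$ and $\cat(K_2)\op$ are cosieves covering $\catK\op$ with intersection $\cat(L)\op$, there is no morphism in $\catK\op$ from an object of $\cat(K_1)\op\setminus\cat(L)\op$ to an object of $\cat(K_2)\op\setminus\cat(L)\op$ or vice versa: such a morphism would, by the cosieve property, force its endpoints into $\cat(K_1)\op\cap\cat(K_2)\op=\cat(L)\op$. Consequently every string of composable morphisms in $\catK\op$ lies entirely in $\cat(K_1)\op$ or entirely in $\cat(K_2)\op$, so the cosimplicial replacement of $D_K$ is, levelwise, the pullback of those of $D_{\overline{K_1}}$ and $D_{\overline{K_2}}$ over that of $D_{\overline{L}}$; applying $\mathrm{Tot}$, which is a limit, yields a natural isomorphism
\[ \underline{f}_{\mathrm{co}}^{K} \;\cong\; \underline{f}_{\mathrm{co}}^{\overline{K_1}} \times_{\underline{f}_{\mathrm{co}}^{\overline{L}}} \underline{f}_{\mathrm{co}}^{\overline{K_2}}. \]
Finally, restriction of a homotopy limit along a cosieve inclusion is a fibration in the Bousfield--Kan model (dual to the standard fact that restriction of a homotopy colimit along a sieve inclusion is a cofibration), so $\underline{f}_{\mathrm{co}}^{\overline{K_1}} \to \underline{f}_{\mathrm{co}}^{\overline{L}}$ is a fibration; a strict pullback one of whose legs is a fibration is a homotopy pullback, which gives the proposition.

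I expect the only real work to be careful bookkeeping rather than a conceptual hurdle: keeping track of the ghost vertices so that $D_K$ genuinely restricts to the $D_{\overline{M}}$ on the nose and the square's maps are the restriction maps of Theorem~\ref{thm:incinducedmap} with trivial pinch, and pinning down the two homotopy-limit facts invoked at the end --- that a category covered by two cosieves with no cross-morphisms computes $\holim$ as the strict pullback of the pieces, and that restriction of $\holim$ along a cosieve is a fibration. Neither is deep, but both want stating precisely for the homotopy-pullback conclusion to be airtight.
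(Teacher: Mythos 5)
Your argument is correct, but it takes a genuinely different route from the paper's. The paper forms the auxiliary category $\mathcal{C}$ obtained by gluing $\cat(K_1)\op$ and $\cat(K_2)\op$ along a tripled copy of $\cat(L)\op$ (the category underlying the cospan of index categories), notes that the homotopy limit over $\mathcal{C}$ of the evident diagram $D'$ is exactly the homotopy pullback of $\underline{f}_{\mathrm{co}}^{\overline{K_1}} \to \underline{f}_{\mathrm{co}}^{\overline{L}} \leftarrow \underline{f}_{\mathrm{co}}^{\overline{K_2}}$, and then identifies $\displaystyle\holim_{\mathcal{C}} D'$ with $\underline{f}_{\mathrm{co}}^{K}$ by right Kan extending $D'$ along the projection $p\colon \mathcal{C} \to \catK\op$ that collapses the tripled simplices, using that right Kan extensions preserve homotopy limits. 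You instead stay on $\catK\op$ and decompose the Bousfield--Kan cosimplicial replacement of the single diagram $D_K$, exploiting the same combinatorial fact that underlies the paper's proof: every chain of face inclusions in $K$ lies wholly in $K_1$ or wholly in $K_2$, with overlap exactly the chains in $L$. The trade-off is where the homotopy-theoretic care is spent: your route yields a \emph{strict} pullback of the four homotopy limits and must then be upgraded to a homotopy pullback by verifying that restriction along the cosieve $\cat(L)\op \subseteq \cat(K_i)\op$ is a fibration (equivalently, that the induced map of cosimplicial replacements is a Reedy fibration --- its relative matching maps are projections of products, so this does go through for objectwise fibrant diagrams); the paper's route lands directly on a homotopy pullback but leaves implicit that its Kan extension is a \emph{homotopy} right Kan extension (harmless here, since the relevant comma-category limits are taken along identity maps). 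You correctly isolate and flag the one step of your argument that needs a precise statement; with that supplied, your proof is complete.
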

\begin{proof}
    Let $\mathcal{C}$ be the category associated to the following diagram
    \begin{equation*}
        \cat(K_1)\op \rightarrow \cat(L)\op \leftarrow \cat(K_2)\op
    \end{equation*}
    where the maps are induced by the inclusions of $L$ into $K_1$ and $K_2$. By Remark \ref{rem:InclusionDiagram}, one may write the elements of the pullback   \[\begin{tikzcd}
	 & {\underline{f}_{\mathrm{co}}^{\overline{K_2}}} \\
	{\underline{f}_{\mathrm{co}}^{\overline{K_1}}} & {\underline{f}_{\mathrm{co}}^{\overline{L}}}
	\arrow[from=2-1, to=2-2]
	\arrow[from=1-2, to=2-2]
\end{tikzcd}\]  as diagrams and we are left with a diagram $D' \colon \mathcal{C} \to \mathbf{Top}_*$.   For each $\sigma \in  L$, let $\sigma_{K_1}$ (resp. $\sigma_{K_2})$ denote the copies in $\mathcal{C}$. The objects of $\mathcal{C}$ are the same as $\catK\op$, but with three copies of each $\sigma \in L$. For each $\sigma \in L$, the maps $D'(\sigma_{K_1}) \to D'(\sigma)$ and $D(\sigma_{K_2}) \to D(\sigma)$ are the identity map. Let $D\colon \catK\op \to \mathbf{Top}_*$ be the diagram defining $\underline{f}_{\mathrm{co}}^{K}$. 
There's a projection map $p\colon \mathcal{C} \to \catK\op$ collapsing the tripled simplices. The right Kan extension of $D'$ along $p$ recovers the diagram $D$. As right Kan extensions preserve homotopy limits, the homotopy pullback diagram has the desired limit. 
% Therefore, for all $\sigma \in L$, we may contract these edges in the diagram without changing the homotopy limit. The resulting diagram $\calD'$ has the shape of $\cat(K)$ and is the diagram with homotopy limit $\underline{f}_{\mathrm{co}}^{\overline{K}}$ by definition.
\end{proof}

Let $K_1$ and $K_2$ be simplicial complexes and let $K = K_1 \sqcup K_2$. By definition of the polyhedral product, $\ux^K = \ux^{K_1} \vee \ux^{K_2}$. In the case of a polyhedral coproduct $\ux^K_{\mathrm{co}}$, using Proposition ~\ref{prop:pullbackSquare}, we show that the dual holds in this case.

\begin{theorem}
    Let $K_1$ and $K_2$ be simplicial complexes, and let $K = K_1 \sqcup K_2$. There is a homotopy equivalence \[\ux^K_{\mathrm{co}} \simeq \ux^{K_1}_{\mathrm{co}} \times \ux^{K_2}_{\mathrm{co}}.\]
\end{theorem}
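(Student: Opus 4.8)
The plan is to apply Proposition~\ref{prop:pullbackSquare} in the special case where the gluing subcomplex $L$ is empty. Take $K_1$ a simplicial complex on $\{1,\ldots,n\}$ and $K_2$ a simplicial complex on $\{n+1,\ldots,m\}$, so that $K = K_1 \sqcup K_2 = K_1 \cup_{\varnothing} K_2$ is their disjoint union on the vertex set $[m]$. Proposition~\ref{prop:pullbackSquare} then gives a homotopy pullback square
\[\begin{tikzcd}
	{\underline{f}_{\mathrm{co}}^{K}} & {\underline{f}_{\mathrm{co}}^{\overline{K_2}}} \\
	{\underline{f}_{\mathrm{co}}^{\overline{K_1}}} & {\underline{f}_{\mathrm{co}}^{\overline{\varnothing}}}
	\arrow[from=2-1, to=2-2]
	\arrow[from=1-1, to=2-1]
	\arrow[from=1-1, to=1-2]
	\arrow[from=1-2, to=2-2]
\end{tikzcd}\]
where $\overline{\varnothing}$ is the empty simplicial complex on $[m]$, i.e. the complex whose only face is $\varnothing$.

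The first key step is to identify the bottom-right corner $\underline{f}_{\mathrm{co}}^{\overline{\varnothing}}$. Since the only face of $\overline{\varnothing}$ is the empty face, its opposite face category is terminal, and the diagram defining the polyhedral coproduct has a single object sent to $\bigvee_{i=1}^m Y_i(\varnothing) = \bigvee_{i=1}^m A_i$; hence $\underline{f}_{\mathrm{co}}^{\overline{\varnothing}} \simeq \bigvee_{i=1}^m A_i$. In the case relevant to the theorem statement, $A_i = *$ for all $i$, so $\underline{f}_{\mathrm{co}}^{\overline{\varnothing}} \simeq *$ is contractible. A homotopy pullback over a contractible space is simply the product of the other two corners, so the square above degenerates to a homotopy equivalence $\underline{f}_{\mathrm{co}}^{K} \simeq \underline{f}_{\mathrm{co}}^{\overline{K_1}} \times \underline{f}_{\mathrm{co}}^{\overline{K_2}}$.

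The second key step is to replace $\underline{f}_{\mathrm{co}}^{\overline{K_1}}$ (the polyhedral coproduct of $K_1$ regarded on the full vertex set $[m]$, with the extra vertices $n+1,\ldots,m$ appearing only via the empty-face contributions $A_{n+1},\ldots,A_m$) by $\underline{f}_{\mathrm{co}}^{K_1}$ (the polyhedral coproduct of $K_1$ on its own vertex set $\{1,\ldots,n\}$), and similarly for $K_2$. In the $\underline{A}=\underline{*}$ setting this follows because the extra wedge summands $A_i = *$ are trivial: more precisely, using the pinch-map natural transformation $\bigvee_{i=1}^m Y_i(\sigma) \to \bigvee_{i=1}^n Y_i(\sigma)$ that collapses the basepoint summands (as in the proof of Theorem~\ref{thm:incinducedmap} and Remark~\ref{rem:InclusionDiagram}), each $D_{\overline{K_1}}(\sigma) = \bigvee_{i\in\sigma} X_i \vee \bigvee_{i\notin\sigma,\, i\le n} * \vee \bigvee_{i>n} * = \bigvee_{i\in\sigma} X_i = D_{K_1}(\sigma)$, so the diagrams literally agree and hence $\underline{f}_{\mathrm{co}}^{\overline{K_1}} \simeq \underline{f}_{\mathrm{co}}^{K_1} = \ux^{K_1}_{\mathrm{co}}$; likewise $\underline{f}_{\mathrm{co}}^{\overline{K_2}} \simeq \ux^{K_2}_{\mathrm{co}}$. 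Combining the two steps yields the desired equivalence $\ux^K_{\mathrm{co}} \simeq \ux^{K_1}_{\mathrm{co}} \times \ux^{K_2}_{\mathrm{co}}$.

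The main obstacle is bookkeeping rather than anything deep: one must be careful that Proposition~\ref{prop:pullbackSquare} genuinely allows the empty subcomplex $L=\varnothing$ (it is stated with ``$L$ possibly empty''), and that the identification of the limit of a cospan over a contractible vertex is valid in $\mathbf{Top}_*$ — which it is, since a homotopy pullback $P \to B \leftarrow Q$ with $B\simeq *$ satisfies $P \times_B Q \simeq P \times Q$. The only other point requiring a little care is confirming that $\underline{f}_{\mathrm{co}}^{\overline{\varnothing}} = \bigvee_{i=1}^m A_i \simeq *$ uses the hypothesis $A_i = *$, which is exactly the hypothesis implicit in writing $\ux^K_{\mathrm{co}}$ throughout the statement; for a general tuple $(\underline{X},\underline{A})$ one would instead obtain a homotopy pullback of $\uxa^{K_1}_{\mathrm{co}}$ and $\uxa^{K_2}_{\mathrm{co}}$ over $\bigvee_{i=1}^m A_i$, which is worth remarking on but not needed here.
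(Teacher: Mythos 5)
Your proposal is correct and follows essentially the same route as the paper: apply Proposition~\ref{prop:pullbackSquare} with $L=\varnothing$, observe that $\ux^{\overline{\varnothing}}_{\mathrm{co}}\simeq *$ and that $\ux^{\overline{K_i}}_{\mathrm{co}}=\ux^{K_i}_{\mathrm{co}}$ because the extra basepoint wedge summands are trivial, and conclude that the homotopy pullback over a point is the product. The paper's proof is just a terser version of the same argument.
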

\begin{proof}
    By definition, since each $A_i = *$, $\ux^{\varnothing} = *$, and $\ux^{\overline{K_i}} = \ux^{K_i}$ for $i \in \{1,2\}$. Therefore, Proposition ~\ref{prop:pullbackSquare} implies there is a homotopy pullback \[\begin{tikzcd}
	{\ux^{K}_{\mathrm{co}}} & {\ux^{K_2}_{\mathrm{co}}} \\
	{\ux^{K_1}_{\mathrm{co}}} & {*.}
	\arrow[from=1-1, to=1-2]
	\arrow[from=1-1, to=2-1]
	\arrow[from=1-2, to=2-2]
	\arrow[from=2-1, to=2-2]
\end{tikzcd}\] Hence, there is a homotopy equivalence \[\ux^K_{\mathrm{co}} \simeq \ux^{K_1}_{\mathrm{co}} \times \ux^{K_2}_{\mathrm{co}}.\qedhere\]
\end{proof}

\bibliographystyle{amsalpha}

\end{document}